\newcommand{\Q}{\mathbb{Q}}
\newcommand{\R}{\mathbb{R}}
\newcommand{\C}{\mathbb{C}}
\newcommand{\N}{\mathbb{N}}
\newcommand{\Z}{\mathbb{Z}}
\newcommand{\K}{\mathbb{K}}
\newcommand{\B}{\mathbb{B}}
\newcommand{\E}{\mathbb{E}}
\newcommand{\e}{\varepsilon}
\newcommand{\esssup}{\text{ess sup}}
\def\one {1\kern -1mm {\text {\rm I}}}
\newcommand{\bfrac}[2]{\genfrac{}{}{0pt}{}{#1}{#2}}
\newcommand{\limn}{\lim_{n \rightarrow \infty}}
\newcommand{\abs}{\overline}
\newtheorem{Theorem}{Theorem}[section]
\newtheorem{Proposition}[Theorem]{Proposition}
\newtheorem{Corollary}[Theorem]{Corollary}
\newtheorem{Lemma}[Theorem]{Lemma}
\newtheorem{Remark}[Theorem]{Remark}
\numberwithin{equation}{section}
\begin{document}

\title[Non-autonomous interacting particle systems in continuum]
    {Non-autonomous interacting particle systems in continuum}

\author{Martin Friesen}
\address{Department of Mathematics, University of Bielefeld, Bielefeld, Germany}
\email{mfriesen@math.uni-bielefeld.de}


\subjclass[2010]{Primary 60J75; Secondary 60G55}
\date{24/03/2016; \ \  Revised 15/06/2016}
\keywords{Interacting particle systems, Feller evolution, pure jump process, configuration space,
Foster-Lyapunov criterion, Kolmogorov equation.}

\begin{abstract}
 A conservative Feller evolution on continuous bounded functions is constructed from a weakly continuous, time-inhomogeneous transition function describing a pure jump process
 on a locally compact Polish space. The transition function is assumed to satisfy a Foster-Lyapunov type condition.
 The results are applied to interacting particle systems in continuum, in particular to general birth-and-death processes (including jumps).
 Particular examples such as the BDLP and Dieckmann-Law model are considered in the end.
\end{abstract}

\maketitle

\vspace*{-3mm}
\section{Introduction}
Classical birth-and-death dynamics are described by a system of ordinary differential equations, also known as Kolmogorov's differential equations,
and are usually studied by semigroup methods on (weighted) spaces of summable real-valued sequences, cf. \cite{KATO54, FELLER68, FELLER71, HP74}.
More recent attempts study such equations on the spaces $\ell^p$ for $p \in [1,\infty)$, see \cite{BA06, BLM06, TV06}.
In contrast to many real world models, see e.g. the Bolker, Dieckmann, Law, Pacala model \cite{BP97, BP99, DL00, DL05} (short BDLP),
such equations do not include the positions of the described particles. Other models coming from ecology and the modeling of mutations can be found
in \cite{N01, BCFKKO14, KM66, SEW05,FFHKKK15} and references therein.

The simplest possibility to include spatial structure is to assign to
each particle a fixed site of a graph (e.g. from the lattice
$\Z^d$). This are the so-called lattice models. For such models a
rigorous study by semigroup methods is adequate and a detailed
presentation can be found in the classical book \cite{LIGGETT} and
references therein.  Several models, such as the BDLP model, require
that the positions of the particles be not a priori fixed. This means
that $\Z^d$ should be replaced by a continuous location space,
e.g. $\R^d$.

For the modeling of interacting particle systems in continuum the theory of pure point processes is commonly used.
Such processes share several properties with the processes associated to lattice models,
but also include numerous unexpected features and require essentially different techniques for their mathematical treatment.
Taking into account that they describe real-world particles it leads to the natural assumption
that all particles are indistinguishable and any two particles cannot occupy the same position in the location space, say for simplicity $\R^d$.
A microscopic state $\eta$ is then, by definition, a finite linear combination of point-masses $\delta_x$, where $x \in \R^d$ is the position of a particle in the system.
Such Markov dynamics can be analyzed by a measure-valued generalization of the Kolmogorov's differential equations.
This equations have been first analyzed in \cite{FELLER40} and have been afterwards further investigated in the next 60 years, cf. \cite{FMS14} and many others.
A summary with applications to interacting particle systems is provided in the book \cite{MU-FA04}.
In this work we identify $\eta$ with a subset of $\R^d$, i.e. we consider the microscopic state as a
finite collection of positions $x \in \R^d$. The state space (= configuration space) is therefore the space of all finite configurations, which is given by
\[
 \Gamma_0 = \{ \eta \subset \R^d \ | \ |\eta| < \infty\}.
\]
Here and in the following we write $|A|$ for the number of elements in $A \subset \R^d$.

\medskip

Interacting particle systems in continuum with state space $\Gamma_0$ are heuristically described by a Markov (pre-)generator on a proper set of functions $F$.
The general form of such operator is given by the heuristic expression

\begin{align}\label{INTRO:00}
 (LF)(\eta) = \sum _{\xi \subset \eta}\int _{\Gamma_0}(F(\eta \backslash \xi \cup \zeta)
 - F(\eta))K(\xi, \eta, d\zeta), \quad \eta \in \Gamma_0,
\end{align}
where $K(\xi,\eta,d\zeta) \geq 0$ is a transition kernel which will be specified in the third section.
The associated (backward) Kolmogorov equation
\[
 \frac{\partial F_t}{\partial t} = LF_t, \quad F_t|_{t=0} = F_0
\]
can be solved on the space of continuous bounded functions, see \cite{EW01, EW03, KOL06}.
The solutions determine therefore a Markov process on $\Gamma_0$. This process describes the evolution of the microscopic state $\eta$,
where each group $\xi \subset \eta$ of particles may disappear and simultaneously a new group of particles $\zeta \in \Gamma_0$ may appear somewhere in $\R^d$.
The distribution of the new particles and the intensity of this event are both described by the transition kernel $K(\xi,\eta, d\zeta)$.
To this end we define for any $\eta \in \Gamma_0$ and Borel measurable set $A \subset \Gamma_0$ a new transition kernel by
\[
 Q(\eta,A) := \sum _{\xi \subset \eta} \int _{\Gamma_0}\one_A(\eta \backslash \xi \cup \zeta)
 K(\xi,\eta, d\zeta),
\]
where $\one_{A}(\eta) := \begin{cases} 1, & \eta \in A \\ 0, & \eta \not \in A \end{cases}$.
Then, under some additional conditions, it is possible to rewrite above operator to
\begin{align}\label{INTRO:01}
 (LF)(\eta) = \int _{\Gamma_0}(F(\xi) - F(\eta))Q(\eta,d\xi), \quad \eta \in \Gamma_0.
\end{align}
In such a case we have to assume that
\begin{align*}
 q(\eta) := Q(\eta, \Gamma_0) = \sum _{\xi \subset \eta} K(\xi, \eta, \Gamma_0)
\end{align*}
is finite for all $\eta \in \Gamma_0$.
Hence the process described by the operator $L$ is a pure jump Markov process and techniques coming from the theory of Markov chains are applicable.
Such approach has been investigated in the last 20 years, a comprehensive summary of the obtained results can be found in \cite{MU-FA04}.
In the case of birth-and-death dynamics, i.e. a particular choice of $K(\xi,\eta,d\zeta)$, an alternative approach
is provided by solving certain stochastic equations, cf. \cite{BEZ152, BEZ151}.

\medskip

This work is organized as follows. The second section is devoted to the study of an abstract pure jump process described by the time-dependent operator
\[
 (L(t)F)(x) = \int _{E}(F(y) - F(x))Q(t,x,dy), \quad x \in E, \quad t \geq 0.
\]
Here $E$ is an abstract locally compact Polish space and it is assumed that the operator $L(t)$ satisfies a Foster-Lyapunov type condition, see \cite{MT93}.
Moreover, we suppose that the transition function $Q(t,x,dy)$ is weakly continuous and satisfies some additional technical conditions.
Based on the classical works \cite{FELLER40, GS75, FMS14}, we construct an associated conservative Feller evolution system $U(s,t)$
on the space of continuous bounded functions.
Hence by \cite{VANCASTEREN11} this Feller evolution is associated to a Hunt process with state space $E$.
For a countable state space $E$ such result was obtained by martingale techniques in \cite{ZZ87}.
We show that $U(s,t)$ provides existence and uniqueness of solutions to the Kolmogorov equations and establish
the relation to the jump process by the associated Martingale problem.
Additional works dealing with properties of (time-homogeneous) Markov processes can be found, e.g., in \cite{CN82, CN83, CHEN86, ZS99}.

The third section is devoted to particular examples of interacting particle systems on $\Gamma_0$.
We apply above results for time-dependent (pre-)generators given by
\begin{align}\label{INTRO:02}
 (L(t)F)(\eta) = \sum _{\xi \subset \eta}\int _{\Gamma_0}(F(\eta \backslash \xi \cup \zeta) - F(\eta))K_t(\xi, \eta,d\zeta).
\end{align}
To this end we use a similar representation to \eqref{INTRO:01} for the time-dependent transition function
\[
 Q(t,\eta, A) = \sum _{\xi \subset \eta}\int _{\Gamma_0}\one_A(\eta \backslash \xi \cup \zeta)
 K_t(\xi,\eta,d\zeta), \quad t \geq 0, \quad \eta \in \Gamma_0.
\]
As a consequence we are able to construct, under some reasonable conditions, an associated conservative Feller evolution system $U(s,t)$ on continuous bounded
functions $F: \Gamma_0 \longrightarrow \R$.
Hence for any initial probability measure $\mu$ on $\Gamma_0$ the action of the adjoint evolution system $U^*(t,s)\mu$
yields a weak solution to the Fokker-Planck equation
\[
 \frac{\partial }{\partial t}\int _{\Gamma_0}F(\eta)\mu_t(d\eta) = \int _{\Gamma_0}L(t)F(\eta)\mu_t(d\eta),
 \quad t \geq s,
\]
where $F: \Gamma_0 \longrightarrow \R$ is continuous and bounded.
Sufficient conditions for which $L^1(\Gamma_0, d\lambda)$ is invariant for $U^*(t,s)$ are given.
By construction, the restriction $U^*(t,s)|_{L^1(\Gamma_0, d\lambda)}$ becomes strongly continuous.

The rest of the third section is devoted to the study of particular examples.
First we consider the BDLP model with time-dependent and non-translation invariant kernels.
The considerations are afterwards extended to the Dieckmann-Law model and a generalization of this model.

\section{General jump processes}
Let $E$ be a locally compact Polish space and denote by $\mathcal{B}(E)$ the Borel-$\sigma$-algebra on $E$.
Denote by $BM(E)$ the Banach space of all bounded measurable functions and by $C_b(E)$ the subspace of all continuous bounded functions.
A pure jump process is determined by its (infinitesimal) transition function, i.e. a function $Q: \R_+ \times E \times \mathcal{B}(E) \longrightarrow \R_+$
with the following properties:
\begin{enumerate}
 \item[]{1.} For all $t \geq 0$, $x \in E$, $A \longmapsto Q(t,x,A)$ is a finite Borel measure with  \break  $Q(t,x,\{x\}) = 0$.
 \item[]{2.} For all $A \in \mathcal{B}(E)$, $(t,x) \longmapsto Q(t,x,A)$ is measurable.
 \item[]{3.} For all $T > 0$ and all compacts $B \subset E$
  \begin{align}\label{GJP:11}
   \sup _{(t,x) \in [0,T]\times B} Q(t,x,E) < \infty.
  \end{align}
\end{enumerate}
Let $BM_{loc}(E)$ be the space of locally bounded measurable functions and $C(E)$ be the space of continuous functions.
Define for any $F: E \longrightarrow \R$
\[
 (Q(t)F)(x) := \int _{E}F(y)Q(t,x,dy), \quad t \geq 0, \quad x \in E,
\]
whenever it makes sense, i.e. $\int _{E}|F(y)|Q(t,x,dy) < \infty$ for all $t \geq 0$, $x \in E$. Then $Q(t): BM(E) \longrightarrow BM_{loc}(E)$
is a well-defined positive linear operator and $q(t,x) := (Q(t)1)(x) = Q(t,x,E)$ is locally bounded.
If $Q(\cdot) C_b(E) \subset C(\R_+ \times E)$, i.e. for any $F \in C_b(E)$ the function $Q(\cdot)F$ is jointly continuous in $(t,x)$,
then we say that $Q$ is jointly continuous. This simply means that, by definition, $Q(t,x,dy)$ is weakly continuous in $(t,x)$.
In such a case \eqref{GJP:11} is automatically satisfied.

We briefly recall the results obtained in \cite{FELLER40, FMS14}. Let $Q$ be a transition function.
For $0 \leq s \leq t, x \in E$ and $A \in \mathcal{B}(E)$ let $P^{(0)}(s,x;t,A) := \delta(x,A)e^{-\int _{s}^{t}q(r,x)dr}$, and for $n \geq 1$
\begin{align}\label{GJP:03}
 P^{(n+1)}(s,x;t,A) := \int _{s}^{t}e^{-\int _{s}^{r}q(\tau,x)d\tau}\Big (\int _E P^{(n)}
 (r,y;t,A)Q(r,x,dy)\Big)dr.
\end{align}
Here $\delta(x,A) := \one_A(x) = \delta_x(A)$. Then $P(s,x;t,A) = \sum _{n=0}^{\infty}P^{(n)}(s,x;t,A)$ is a sub-Markov transition function.
Moreover, for fixed $A \in \mathcal{B}(E)$ and $x \in E$ it is absolutely continuous in $s$ and $t$, respectively such that $P(s,x;t,A) \to \delta(x,A)$ holds
uniformly in $A \in \mathcal{B}(E)$ whenever $s \to t^-$ or $t \to s^+$. For any $A \in \mathcal{B}(E)$ it is a.e. differentiable in $s \in [0,t]$ and satisfies
\begin{align}\label{GJP:00}
 \frac{\partial P(s,x;t,A)}{\partial s} = q(s,x)P(s,x;t,A) - \int _{E}P(s,y;t,A)Q(s,x,dy).
\end{align}
Likewise, for any compact $A \subset E$ it is differentiable for a.a. $t \in [s, \infty)$ and satisfies
\begin{align}\label{GJP:01}
 \frac{\partial P(s,x;t,A)}{\partial t} = - \int _{A}q(t,y)P(s,x;t,dy) + \int _{E}Q(t,y,A)P(s,x,t,dy).
\end{align}
It follows from \cite{FMS14} that $P$ is the minimal solution to \eqref{GJP:00} and \eqref{GJP:01}. Moreover, if $P(s,x;t,E) = 1$,
then this solution is also unique.

The main point of our interest is to study the (sub-)Markovian evolution system
\begin{align}\label{GJP:23}
 U(s,t)F(x) := \int _{E}F(y) P(s,x;t,dy), \quad 0 \leq s \leq t
\end{align}
on the space of bounded measurable functions and extensions of it. Such an evolution system is a family of positive bounded linear
operators such that $U(s,s)F = F$ and $U(s,r)U(r,t)F = U(s,t)F$ for $0 \leq s \leq r \leq t$. For $F \in BM(E)$ let
\begin{align}\label{GJP:24}
 L(t)F(x) = \int _{E}(F(y) - F(x))Q(t,x,dy), \quad t \geq 0
\end{align}
be the (formal) generator of $U(s,t)$. Since in general $U(s,t)F$ is not continuous w.r.t. the norm on $BM(E)$ or $C_b(E)$, we cannot expect that some extension
of $L(t)$ is a generator. However, for our needs it is sufficient to consider only the weaker concept of pointwise generator
The precise statement is given in the proposition below.

Denote by $\mathcal{C}$ the collection of compact sets on $E$
and by $\mathcal{C}_1$ the collection of compacts in $\R_+ \times E$. For a given non-negative function $V \in C(E)$ let
$\Vert F \Vert_V := \sup _{x \in E}\frac{|F(x)|}{1 + V(x)}$ and denote by $BM_V(E)$ the space of all measurable functions for which $\Vert F \Vert_V$ is finite.
Denote by $C_V(E) := BM_V(E) \cap C(E)$ its closed subspace of continuous functions. Below we state the main result for this section.
\begin{Proposition}\label{GJPTH:01}
 Assume that there exists a continuous function $V: E \longrightarrow \R_+$ such that $(t,x) \longmapsto Q(t)F(x)$ is continuous for any $F \in C_V(E)$.
 Moreover, suppose that there exists a continuous function $c: \R_+ \longrightarrow \R_+$ such that the properties below are satisfied.
 \begin{enumerate}
  \item[]{1.}For all $T > 0$ there exists $a(T) > 0$ such that $q(t,x) \leq a(T)V(x)$ holds for all $t \in [0,T]$ and $x \in E$.

  \item[]{2.} The Foster-Lyapunov estimate
  \begin{align}\label{GJP:12}
   \int _{E}V(y)Q(t,x,dy) \leq c(t)V(x) + q(t,x)V(x), \quad t \geq 0, \quad  x \in E
  \end{align}
  is satisfied.

  \item[]{3.} For all $\e > 0$, $B \in \mathcal{C}$ and $T > 0$ there exists $A \in \mathcal{C}$ such that
   \begin{align}\label{GJP:07}
    \int _{0}^{T}Q(r,x,A^c)\,dr < \e, \quad x \in B
  \end{align}
  is fulfilled.
 \end{enumerate}

 Then $U(s,t)$ is a conservative Feller evolution system, i.e. $U(s,t)1 = 1$ and $$(s,t,x) \longmapsto U(s,t)F(x)$$ is continuous for any $F \in C_b(E)$.
 Moreover, $U(s,t)$ can be extended to $BM_V(E)$ so that
 \begin{align}\label{GJP:13}
  \Vert U(s,t)F\Vert_V \leq e^{\int _{s}^{t}c(r)dr} \Vert F \Vert_V, \quad 0 \leq s \leq t.
 \end{align}
 The relation to the Kolmogorov equations is given by the statements below:
 \begin{enumerate}
  \item[]{(a)} For any $F \in BM(E)$, $t > 0$ and $x \in E$, $[0,t] \ni s \longmapsto U(s,t)F(x)$ is continuously differentiable and a solution to
   \begin{align}\label{GJP:16}
    \frac{\partial}{\partial s}U(s,t)F(x) = - L(s)U(s,t)F(x).
   \end{align}
   If in addition $F \in C_V(E)$, then $s \longmapsto U(s,t)F(x)$ is absolutely continuous and satisfies
   \eqref{GJP:16} a.e.

  \item[]{(b)} Let $F \in BM(E)$.
    Then for any $x \in E$, $s \geq 0$, $[s,\infty) \ni t \longmapsto U(s,t)F(x)$ is absolutely continuous and satisfies for a.a. $t \geq s$
   \begin{align}\label{GJP:20}
    \frac{\partial}{\partial t}U(s,t)F(x) = U(s,t)L(t)F(x).
   \end{align}

  \item[]{(c)} Let $V(s,t)$ be a Feller evolution system on $C_b(E)$. If for any $F \in C_b(E)$, $V(s,t)F$ is a solution to \eqref{GJP:16} or \eqref{GJP:20},
  then $V(s,t) = U(s,t)$ holds.
 \end{enumerate}
\end{Proposition}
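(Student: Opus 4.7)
The strategy is to bootstrap all of the claims from the minimal sub-Markov transition function $P(s,x;t,A)=\sum_{n\ge 0}P^{(n)}(s,x;t,A)$ recalled just before the proposition, with the Foster--Lyapunov bound doing the heavy lifting. Rewriting hypothesis 2 as $L(t)V(x)\le c(t)V(x)$ and integrating the forward equation (\ref{GJP:01}) against the truncations $V_{k}:=V\wedge k$ (which is legitimate because $V_{k}$ is bounded and the sub-Markov measures are inner regular on compacts), Gronwall's inequality followed by monotone convergence yields
\[
\int_{E}V(y)\,P(s,x;t,dy)\le V(x)\exp\Bigl(\int_{s}^{t}c(r)\,dr\Bigr).
\]
Combined with hypothesis 1, this moment estimate forbids explosion on any $[0,T]$ and forces $P(s,x;t,E)=1$, i.e.\ $U(s,t)1=1$. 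Since $|F(x)|\le \|F\|_{V}(1+V(x))$, the same bound immediately gives the extension to $BM_{V}(E)$ together with the norm estimate (\ref{GJP:13}).

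For the Feller property I would show by induction on $n\ge 0$ that $(s,t,x)\longmapsto\int_{E}F(y)P^{(n)}(s,x;t,dy)$ is jointly continuous for every $F\in C_{b}(E)$. The base case $P^{(0)}F(x)=F(x)e^{-\int_{s}^{t}q(r,x)\,dr}$ is handled by noting that $q=Q(t)1$ is itself jointly continuous via the hypothesis applied to the constant function $1\in C_{V}(E)$. The inductive step uses the recursion (\ref{GJP:03}) together with continuity of $G\mapsto Q(t)G$ on $C_{V}$, with dominated convergence underwritten by hypothesis 1 and the moment bound. The tightness condition (\ref{GJP:07}) is then used to upgrade pointwise convergence of $\sum_{n\le N}P^{(n)}F$ to convergence uniform on cylinders $[0,T]^{2}\times B$ with $B\in\mathcal{C}$, by approximating $F$ by a function supported on a large compact $A\in\mathcal{C}$ up to error controlled by $\int_{0}^{T}Q(r,x,A^{c})\,dr$. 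This delivers joint continuity of $(s,t,x)\mapsto U(s,t)F(x)$ on $C_{b}(E)$.

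Parts (a) and (b) are then obtained by integrating the backward equation (\ref{GJP:00}) and the forward equation (\ref{GJP:01}) against $F$. For (a), the identity $-L(s)U(s,t)F(x)=q(s,x)U(s,t)F(x)-\int_{E}U(s,t)F(y)Q(s,x,dy)$ matches the right-hand side of (\ref{GJP:00}), and the joint continuity just established, together with weak continuity of $Q$, upgrades a.e.\ differentiability to continuous differentiability when $F\in BM(E)$ (only absolute continuity is asserted for $F\in C_{V}(E)$, which is immediate). For (b), I would first verify (\ref{GJP:20}) on $F=\one_{A}$ with $A$ compact via the identity $L(t)\one_{A}(y)=Q(t,y,A)-\one_{A}(y)q(t,y)$ and (\ref{GJP:01}), and then extend to arbitrary $F\in BM(E)$ by a monotone class argument, the moment estimate supplying the necessary integrability. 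For uniqueness (c), if $V(s,t)$ is another Feller evolution satisfying (\ref{GJP:16}) or (\ref{GJP:20}), one forms $r\longmapsto V(s,r)U(r,t)F$ on $[s,t]$, shows via the equation that its derivative vanishes a.e., and concludes $V(s,t)F=U(s,t)F$ by comparing the endpoints.

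The main obstacle I anticipate is the rigorous passage from the forward equation (\ref{GJP:01}), stated only for compact Borel sets, first to the unbounded Lyapunov function $V$ in the moment estimate of paragraph 1 and then to arbitrary $F\in BM(E)$ in part (b): without the tightness (\ref{GJP:07}) mass could escape to infinity along the approximation and the Gronwall closure of the moment bound would break down. Joint (as opposed to separate) continuity in $(s,t,x)$ is the other delicate point, and it also relies on (\ref{GJP:07}) to localise the tails of the series $\sum_{n}P^{(n)}$ uniformly on compacts.
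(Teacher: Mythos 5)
Your overall architecture (moment bound first, then conservativeness, then Feller property, then (a)--(c) from the backward/forward equations) parallels the paper, but the keystone step is gapped. You derive the moment estimate $\int_E V\,dP(s,x;t,\cdot)\le V(x)e^{\int_s^t c(r)dr}$ by integrating the forward equation \eqref{GJP:01} against $V_k=V\wedge k$ and closing with Gronwall. The pointwise inequality $L(t)V_k\le c(t)V_k$ is fine, but the integrated identity $\frac{d}{dt}\int V_k\,dP_t=\int L(t)V_k\,dP_t$ is not justified for the minimal solution: \eqref{GJP:01} is only available for compact $A$, and when you exhaust $E$ by compacts $A_m$ the gain term $\int_s^t\int_E\bigl(\int_{A_m}V_k\,dQ(r,y,\cdot)\bigr)P(s,x;r,dy)\,dr$ and the loss term $\int_s^t\int_{A_m}qV_k\,dP\,dr$ both increase to quantities dominated only by $k\int_s^t\int_E q(r,y)P(s,x;r,dy)\,dr$ --- and the finiteness of $\int q\,dP_r$ (equivalently, by hypothesis 1, of $\int V\,dP_r$) is exactly what you are trying to prove. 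Both fluxes may be simultaneously infinite, so the subtraction producing the Gronwall inequality is circular. Inner regularity does not help, and note also that your diagnosis of the obstruction is off: condition \eqref{GJP:07} controls spatial escape of mass, not this integrability. The paper breaks the circularity differently: it damps the series by $\alpha<1$ (so everything converges geometrically), proves the Feller property for $U_\alpha$, establishes a stability lemma (this is where \eqref{GJP:07} actually enters) yielding a Chernoff product formula approximating $U_\alpha(s,t)$ by compositions of time-homogeneous semigroups $T_{\alpha,r}(t)$, imports the known homogeneous bound $T_r(t)V\le e^{c(r)t}V$ from the time-independent theory, and only then passes to the limits in $n$, $m$ and $\alpha\to 1$ by monotonicity. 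Your argument would need an analogous regularization (e.g. bounded truncated kernels $\varphi_nQ$, as the paper uses later in Corollary \ref{GJPTH:05}) before Gronwall can be run; as written it fails.

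Two further soft spots. First, your upgrade of $\sum_n U^{(n)}(s,t)F$ to uniform convergence on compacts via \eqref{GJP:07} does not work as stated: the tail in $n$ counts jumps, and tightness in space says nothing about it. Once conservativeness is in hand the correct elementary tool is Dini's theorem applied to the continuous, monotone partial sums $\sum_{n\le N}P^{(n)}(s,x;t,E)\uparrow 1$; the paper instead proves the Feller property of $U$ probabilistically, through the supermartingale $e^{-\int_s^t c}V(X^n_t)\one_{t<\tau_n}$, Doob's inequality, and the resulting uniform-on-compacts bound $\mathbb{P}_{s,x}(\tau_n\le t)\le V(x)e^{\int_s^t c(r)dr}/n$. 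Second, for uniqueness (c) your plan to differentiate $r\mapsto V(s,r)U(r,t)F$ needs a derivative of $V$ in its second time variable and a domination of $|L(r)U(r,t)F|\le 2\Vert F\Vert_\infty q(r,\cdot)$ under the (unknown) kernels of $V$, for which no moment information is available; the paper avoids this by invoking van Casteren's representation of $V(s,t)$ by a transition probability function $\widetilde P$, the minimality $P\le\widetilde P$ of Feller's solution, and conservativeness of $P$ to force $P=\widetilde P$. Parts (a) and (b) of your sketch are essentially the paper's argument and are fine modulo the moment bound they presuppose.
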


\medskip

The time-homogeneous case was, e.g., treated in \cite{MU-FA04, KOL06}. Condition \eqref{GJP:12} can be reformulated to
\[
 \int _{E}(V(y) - V(x))Q(t,x,dy) \leq c(t)V(x), \quad t \geq 0, \quad x \in E.
\]
A transition function $Q$ with property \eqref{GJP:07} is said to have the localization property.
Property (c) means that $U(s,t)$ is the unique Feller evolution system associated with the operator $L(t)$.
The rest of this section is devoted to the proof of above statement.

Suppose from now on the conditions given in Proposition \ref{GJPTH:01} to be satisfied and let $\alpha \in (0,1)$.
Applying the iteration \eqref{GJP:03} to $(q(t,x), \alpha Q(t,x,dy))$ yields the sub-probability function given by
\begin{align}\label{GJP:26}
 P_{\alpha}(s,x;t,dy) = \sum _{n=0}^{\infty}\alpha^n P^{(n)}(s,x;t,dy).
\end{align}
Let $U_n(s,t)F(x) := \int _{E}F(y)P^{(n)}(s,x;t,dy)$, then $U_{\alpha}(s,t)F(x) := \sum _{n=0}^{\infty}\alpha^n U^{(n)}(s,t)F(x)$
defines an evolution system. We will call $U_{\alpha}(s,t)$ the regularized evolution system associated to $Q$. Clearly, above series converges uniformly in $(s,t,x)$.
The next lemma establishes the Feller property for $U_{\alpha}(s,t)$, whereas the limit $\alpha \to 1$ will be considered at the end of this section.
\begin{Lemma}\label{GJPTH:00}
 $(U_{\alpha}(s,t))_{0 \leq s \leq t}$ is a Feller evolution on $C_b(E)$.
\end{Lemma}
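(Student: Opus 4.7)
The plan is to establish the Feller property of $U_\alpha$ on $C_b(E)$, i.e.\ that $U_\alpha(s,t)$ maps $C_b(E)$ into itself and that $(s,t,x) \longmapsto U_\alpha(s,t)F(x)$ is jointly continuous for every $F \in C_b(E)$ (the evolution property itself is essentially built into the construction of $P_\alpha$). The key reduction is that $\sum_n P^{(n)}(s,x;t,E) = P(s,x;t,E) \leq 1$ yields $\Vert U^{(n)}(s,t)F \Vert_\infty \leq \Vert F \Vert_\infty$ for every $n$, so the series $U_\alpha(s,t)F = \sum_{n \geq 0} \alpha^n U^{(n)}(s,t)F$ converges uniformly in $(s,t,x)$; it therefore suffices to show that each $(s,t,x) \longmapsto U^{(n)}(s,t)F(x)$ is jointly continuous.

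I would proceed by induction on $n$. For $n=0$, $U^{(0)}(s,t)F(x) = F(x)\, e^{-\int_s^t q(r,x)\, dr}$; joint continuity is clear once one notes that $1 \in C_V(E)$, so that the standing hypothesis gives joint continuity of $(r,x) \longmapsto q(r,x) = (Q(r)1)(x)$. For the inductive step, \eqref{GJP:03} gives
\[
 U^{(n+1)}(s,t)F(x) = \int_s^t e^{-\int_s^r q(\tau,x)\, d\tau}\, H_n(r,t,x)\, dr, \qquad H_n(r,t,x) := \int_E U^{(n)}(r,t)F(y)\, Q(r,x,dy).
\]
Since the exponential factor is jointly continuous in $(s,r,x)$, the integration bounds behave trivially, and the integrand is dominated by the locally bounded $\Vert F \Vert_\infty\, q(r,x)$, joint continuity of $U^{(n+1)}(s,t)F(x)$ reduces to joint continuity of $H_n(r,t,x)$.

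The key technical step is therefore the joint continuity of $H_n$, where both the integrand $U^{(n)}(r,t)F(y)$ and the measure $Q(r,x,dy)$ vary with the parameters. Given a sequence $(r_k,t_k,x_k) \to (r_0,t_0,x_0)$ I would write
\[
 H_n(r_k,t_k,x_k) - H_n(r_0,t_0,x_0) = A_k + B_k,
\]
with $A_k := \int_E [U^{(n)}(r_k,t_k)F(y) - U^{(n)}(r_0,t_0)F(y)]\, Q(r_k,x_k,dy)$ and $B_k := \int_E U^{(n)}(r_0,t_0)F(y)\, [Q(r_k,x_k,dy) - Q(r_0,x_0,dy)]$. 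The term $B_k \to 0$ is immediate because $U^{(n)}(r_0,t_0)F \in C_b(E) \subset C_V(E)$, and the standing hypothesis then gives continuity of $(r,x) \longmapsto \int_E U^{(n)}(r_0,t_0)F(y)\, Q(r,x,dy)$. For $A_k$, the same hypothesis applied to all $f \in C_b(E)$ (and to $f \equiv 1$ for the total masses) yields weak convergence $Q(r_k,x_k,\cdot) \to Q(r_0,x_0,\cdot)$ of finite Borel measures on the Polish space $E$, so by Prohorov the sequence $\{Q(r_k,x_k,\cdot)\}_k$ is uniformly tight. For $\e > 0$ pick a compact $K$ with $Q(r_k,x_k,K^c) < \e$ for all $k$, and use the inductive hypothesis together with uniform continuity on compacts to produce uniform convergence $U^{(n)}(r_k,t_k)F \to U^{(n)}(r_0,t_0)F$ on $K$; then
\[
 |A_k| \leq \sup_{y \in K} \bigl|U^{(n)}(r_k,t_k)F(y) - U^{(n)}(r_0,t_0)F(y)\bigr|\cdot q(r_k,x_k) + 2\Vert F \Vert_\infty\, \e,
\]
which is arbitrarily small as $k \to \infty$.

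The main obstacle is precisely this combination: neither dominated convergence nor the weak-continuity hypothesis alone handles a varying integrand against a varying measure, and it is the synthesis of Prohorov tightness (produced for free by weak convergence of finite measures on a Polish space) with uniform continuity of $U^{(n)}(\cdot,\cdot)F$ on compact subsets of $\R_+^2 \times E$ that closes the induction. Once each $U^{(n)}(s,t)F$ is known to be jointly continuous and bounded by $\Vert F \Vert_\infty$, the uniform summability of $\alpha^n U^{(n)}$ transfers both properties to $U_\alpha(s,t)F$, establishing the Feller property on $C_b(E)$.
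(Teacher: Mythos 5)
Your proof is correct, and structurally it coincides with the paper's: both reduce the Feller property to joint continuity of each $U^{(n)}(s,t)F(x)$ via the uniform (in $(s,t,x)$ and geometric in $n$) convergence of the series, and both run an induction through the recursion \eqref{GJP:03}, with the crux being joint continuity of $(r,t,x) \longmapsto \int_E U^{(n)}(r,t)F(y)\,Q(r,x,dy)$, i.e.\ a varying continuous bounded integrand paired against a weakly continuous kernel. The only genuine divergence is in how that crux is handled. The paper delegates it to the appendix Lemma \ref{LEMMAFELLER2}, whose proof makes the same splitting as your $A_k + B_k$, but obtains the needed tightness control differently: it picks a compact $A$ with small $Q(r_0,x_0,A^c)$, fattens it to $A_1$ using \emph{local compactness} of $E$, and applies the Portmanteau theorem to the closed set $(\overset{\circ}{A_1})^c$ to bound $\limsup_k Q(r_k,x_k,(\overset{\circ}{A_1})^c)$. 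You instead invoke the converse half of Prohorov's theorem (Le Cam): a weakly convergent sequence of finite Borel measures on a Polish space is uniformly tight — which is valid here since $C_b(E) \subset C_V(E)$ gives weak convergence $Q(r_k,x_k,\cdot) \to Q(r_0,x_0,\cdot)$ including total masses. Your route is heavier machinery but does not use local compactness at this step (the paper needs $E$ locally compact anyway, elsewhere); the paper's route is elementary and self-contained. One cosmetic difference: at $n=0$ and for the outer $dr$-integral with moving endpoints you wave at dominated convergence where the paper invokes its Lemma \ref{LEMMAFELLER}, but the domination by $\Vert F\Vert_\infty q(r,x)$ with $q$ locally bounded is exactly what that lemma encapsulates, so nothing is missing.
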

\begin{proof}
 It suffices to show that for any $n\geq 0$ and each $F \in C_b(E)$ the
 function \break
 $U^{(n)}(s,t)F(x)$ is continuous in all variables.
 Since $U^{(0)}(s,t)F(x) = F(x)e^{- \int _{s}^{t}q(r,x)dr}$, by Lemma \ref{LEMMAFELLER} this clearly holds for $n=0$.
 Assume the assertion holds for some $n \geq 0$. By \eqref{GJP:03} we get
 \begin{align}\label{GJP:04}
  U^{(n+1)}(s,t)F(x) = \int _{s}^{t}\int _E e^{- \int _{s}^{r}q(\tau,x)d\tau}(U^{(n)}(r,t)
  F)(y)Q(r,x,dy)\,dr.
 \end{align}
 By induction hypothesis $e^{-\int _{s}^{r}q(\tau,x)d\tau}(U^{(n)}(r,t)F)(y)$ is continuous in all variables.
 Moreover, due to $|U^{(n)}(r,t)F(y)| \leq \Vert F \Vert_{\infty}$ this function is bounded and hence by Lemma \ref{LEMMAFELLER2} we see that also
 \[
  (s,t,x) \longmapsto \int _E e^{-\int _{s}^{r}q(\tau,x)d\tau}(U^{(n)}(r,t)F)(y)Q(r,x,dy)
 \]
 is continuous. Thus Lemma \ref{LEMMAFELLER} yields the continuity of $U^{(n+1)}(s,t)F(x)$ in the variables $(s,t,x)$.
\end{proof}

The next result studies stability of the Feller evolution $U_{\alpha}(s,t)$ with respect to $Q$.
That is given a sequence of transition functions $(Q_{j})_{j \in \N}$, we are
interested in conditions such that $U_{\alpha,j}(s,t)F \longrightarrow U_{\alpha}(s,t)F$ as $j \to \infty$, where $U_{\alpha,j}(s,t)$ are the regularized
evolution systems defined as in \eqref{GJP:26}.
For functions $f \in C_b(E \times E)$ let $(Q(t)f(x,\cdot))(y) := \int _{E}f(x,w)Q(t,y,dw)$.
\begin{Lemma}\label{CONVERGENCELEMMA}
 Let $(Q_j)_{j \in \N}$ be a family of transition functions and assume that $Q_j$ is weakly continuous for any $j \in \N$. Moreover, suppose that
 the following conditions below are satisfied.
\begin{enumerate}
 \item[]{1.} Let $q_j(t,x) := Q_j(t,x,E)$, then $\sup _{\bfrac{j \geq 1}{(t,x) \in B}} q_j(t,x) < \infty$ holds for all $B \in \mathcal{C}_1$.
 \item[]{2.} For any $f \in C_b(E \times E)$ the convergence
 \begin{align}\label{GJP:08}
  (Q_j(t)f(x,\cdot))(x) \longrightarrow (Q(t)f(x,\cdot))(x), \quad  j \to \infty
 \end{align}
 is uniform in $(t,x) \in B$ for any $B \in \mathcal{C}_1$.
\end{enumerate}
Then for any $0 \leq s \leq t$ and $F \in C_b(E)$
\begin{align}\label{GJP:09}
 U_{\alpha,j}(s,t)F \longrightarrow U_{\alpha}(s,t)F, \quad j \to \infty
\end{align}
holds uniformly on compacts. If instead of \eqref{GJP:08} the stronger convergence in the total variation norm holds, i.e.
\[
 \sup _{(t,x) \in [0,T] \times B} \Vert Q_j(t,x,\cdot) - Q(t,x,\cdot)\Vert \to 0, \quad j \to \infty
\]
for any $T > 0$, then the convergence \eqref{GJP:09} is uniform on any $A \in \mathcal{C}_1$ and on $\Vert F \Vert_{\infty} \leq 1$.
\end{Lemma}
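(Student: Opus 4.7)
The plan is to prove \eqref{GJP:09} level by level in the Picard-type iteration \eqref{GJP:03} and then sum the series. Specifically, I will show by induction on $n$ that, for each $F \in C_b(E)$, the convergence
\[
U^{(n)}_j(s,t)F(x) \longrightarrow U^{(n)}(s,t)F(x), \quad j \to \infty,
\]
holds uniformly on every compact subset of $\{(s,t,x) : 0 \le s \le t\}$. Once this is in hand, the bound $\Vert U^{(n)}_j(s,t)F\Vert_{\infty} \le \Vert F\Vert_{\infty}$ together with a geometric tail-truncation argument promotes it to uniform convergence on compacts of the full series $U_{\alpha,j}(s,t)F \to U_{\alpha}(s,t)F$. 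For the base case $n = 0$, applying condition~2 with $f \equiv 1 \in C_b(E\times E)$ yields $q_j \to q$ uniformly on compacts of $\R_+\times E$; combined with condition~1 this gives uniform convergence of the exponential factor in $U^{(0)}_j(s,t)F(x) = F(x)e^{-\int_s^t q_j(\tau,x)d\tau}$.

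For the induction step I would use the recursion \eqref{GJP:04}, set $\Phi_j(r,y,x) := e^{-\int_s^r q_j(\tau,x)d\tau}U^{(n)}_j(r,t)F(y)$ and $\Phi$ analogously, and split the difference of the inner integrals as
\[
I_j(r,x) + J_j(r,x) := \int_E(\Phi_j - \Phi)(r,y,x)\,Q_j(r,x,dy) + \int_E \Phi(r,y,x)\,(Q_j - Q)(r,x,dy).
\]
The term $J_j$ is handled by condition~2 applied to the function $(x,y) \mapsto \Phi(r,y,x)$, which lies in $C_b(E\times E)$: its joint continuity follows from the Feller property of $U^{(n)}$ (Lemma~\ref{GJPTH:00}) together with continuity of $q$, inherited from the continuous $q_j$ as a uniform-on-compacts limit. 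Integrating in $r$ and applying dominated convergence then pushes the limit through the outer $\int_s^t$.

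The main obstacle is the term $I_j$: the induction hypothesis only controls $U^{(n)}_j F - U^{(n)} F$ on compacts of $E$, whereas the $y$-integration is over all of $E$. I bridge this by first establishing uniform tightness of the family $\{Q_j(r,x,\cdot) : j \ge 1,\ (r,x) \in B\}$ for every compact $B \subset \R_+\times E$. Taking $0 \le \phi_k \uparrow 1$ compactly supported continuous on $E$ with $\phi_k \equiv 1$ on some compact $K_k$, condition~2 yields uniform convergence in $j$ and $(r,x)\in B$ of $\int \phi_k\,dQ_j$ to $\int\phi_k\,dQ$, while the elementary bound $Q_j(r,x,E\setminus K_k) \le q_j(r,x) - \int\phi_k\,dQ_j(r,x,\cdot)$ and Dini's theorem applied to the limit side $q - \int \phi_k\,dQ$ close the argument. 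Splitting $|I_j(r,x)|$ over a large compact $K\subset E$ and its complement, the first piece vanishes by the induction hypothesis and condition~1, while the second is bounded by $2\Vert F\Vert_\infty\sup_{j,(r,x)\in B}Q_j(r,x,E\setminus K)$ and is small by tightness. For the total-variation strengthening the tightness detour is unnecessary: the direct bounds $|I_j|,|J_j| \le 2\Vert F\Vert_\infty\Vert Q_j(r,x,\cdot) - Q(r,x,\cdot)\Vert$ immediately yield the uniform conclusion with $F$-dependence only through $\Vert F\Vert_\infty$, as claimed.
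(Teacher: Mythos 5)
Your weak-convergence half is correct and follows the paper's architecture: term-by-term induction through the series using $|U_j^{(n)}(s,t)F|\leq \Vert F\Vert_\infty$, base case via condition~2 with $f\equiv 1$, and a split of the $(n+1)$-st difference into a ``difference of integrands against $Q_j$'' piece and a ``fixed integrand against $Q_j-Q$'' piece (your $I_j$ merges the paper's $I_1$ and $I_2$; your $J_j$ is its $I_3$, handled identically by fixing $r$ and dominating in $r$). The genuinely different step is the tail control in $I_j$. The paper proves a sub-lemma transferring the localization property \eqref{GJP:07} of the limit kernel $Q$ to the family $(Q_j)$, sandwiching a Urysohn function between indicators of complements of nested compacts to get the time-integrated bound $\int_0^T Q_j(t,x,A^c)\,dt\leq\e$ for $j\geq j_0$; you instead derive the stronger pointwise uniform tightness $\sup_{(r,x)\in B}Q_j(r,x,K^c)<\e$ for large $j$ from condition~2, $\sigma$-compactness of the locally compact Polish space $E$, and Dini's theorem applied to $g_k=q-\int\phi_k\,dQ\downarrow 0$ (note $g_k$ is continuous, either by the standing weak continuity of $Q$ from Proposition~\ref{GJPTH:01} or as a uniform-on-compacts limit of the continuous $q_j-\int\phi_k\,dQ_j$). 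This is valid and buys independence from hypothesis \eqref{GJP:07}, which your argument never invokes, at the cost of a slightly longer detour. One overclaim: you announce the induction uniformly on compacts in all of $(s,t,x)$, but your $J_j$ step (condition~2 for a \emph{single} $f$, then dominated convergence in $r$) only delivers, for fixed $(s,t)$, uniformity in $x$ on compacts --- the $t$-dependence of $U^{(n)}(r,t)F$ does not fold into condition~2 as stated. Since the lemma asserts convergence only for fixed $(s,t)$, weaken the induction statement to that (as the paper does, using dominated convergence in $r$ for the $I_2$-type integrand) and nothing else changes.

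The genuine flaw is your last sentence. The bound $|I_j|\leq 2\Vert F\Vert_\infty\Vert Q_j(r,x,\cdot)-Q(r,x,\cdot)\Vert$ is false: $I_j$ pairs the difference of integrands $\Phi_j-\Phi$ with the single kernel $Q_j$, so the total-variation distance between the kernels does not majorize it (it controls $J_j$, not $I_j$). In the total-variation case you must still split $I_j$ over a compact and its complement: the exponential part of $\Phi_j-\Phi$ is globally controlled via $|q_j-q|\leq\Vert Q_j-Q\Vert$, but the part $U_j^{(n)}F-U^{(n)}F$ is controlled by the induction hypothesis (now strengthened to be uniform over $\Vert F\Vert_\infty\leq 1$) only for $y$ in compacts --- and no global bound can exist, since the TV hypothesis itself is only uniform over compacts $[0,T]\times B$ in the kernel's arguments. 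So the tail must again be handled by your tightness statement exactly as in the weak case (alternatively, re-telescope as $\int\Phi_j\,d(Q_j-Q)+\int(\Phi_j-\Phi)\,dQ$ and control the tail by the single kernel $Q$). This matches the paper, which in the TV case explicitly keeps the estimates of $I_1,I_2$ --- including the localization sub-lemma --- and says only $I_3$ needs re-estimating. The repair uses only machinery you already set up, so the gap is local, but as written the final step does not hold.
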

\begin{proof}
 Since $Q, Q_j$ are transition functions, it follows that $U_{\alpha,j}(s,t)$ and $U_{\alpha}(s,t)$ are Feller evolution systems on $C_b(E)$ obtained by
 \[
  U_{\alpha,j}(s,t)F(x) = \sum _{n=0}^{\infty}\alpha^n U_{j}^{(n)}(s,t)F(x)
 \]
 and
 \[
  U_{\alpha}(s,t)F(x) = \sum _{n=0}^{\infty}\alpha^n U^{(n)}(s,t)F(x).
 \]
 Since $|U^{(n)}(s,t)F(x)|, |U_{j}^{(n)}(s,t)F(x)| \leq \Vert F\Vert_{\infty}$ the convergence of the series is also uniform in $j \geq 1$.
 As a consequence it is enough to show for any $0 \leq s \leq t$, any compact $B \subset E$, $n \geq 0$ and $F \in C_b(E)$
 \begin{align}\label{GJP:10}
  \lim_{j \to \infty}\sup_{x \in B}\ |U_{j}^{(n)}(s,t)F(x) - U^{(n)}(s,t)F(x)| = 0.
 \end{align}
 For $n=0$ this follows from \eqref{GJP:08} and
 \[
  |U_{j}^{(0)}(s,t)F(x) - U^{(0)}(s,t)F(x)| \leq \Vert F \Vert_{\infty} \int _{s}^{t}|q_j(r,x) - q(r,x)|dr.
 \]
 Assume that \eqref{GJP:10} holds for one $n \geq 0$,
 proceeding by induction we obtain for $x \in B, 0 \leq s \leq t$ and $F \in C_b(E)$
 \[
  |U_j^{(n+1)}(s,t)F(x) - U^{(n+1)}(s,t)F(x)| \leq I_1 + I_2 + I_3\ ,
 \]
 where we have used \eqref{GJP:04} and
 \begin{align*}
  I_1 &= \int _{s}^{t}\int _E \Big| e^{-\int _{s}^{r}q_j(\tau,x)d\tau}
  - e^{-\int _{s}^{r}q(\tau,x)d\tau}\Big||U_j^{(n)}(r,t)F(y)|Q_j(r,x,dy)\,dr
  \\ I_2 &= \Big| \int _{s}^{t}\int _E (U_j^{(n)}(r,t)F(y)
  - U^{(n)}(r,t)F(y))e^{-\int _{s}^{r}q(\tau,x)d\tau}Q_j(r,x,dy)\,dr\Big|
  \\ I_3 &= \Big| \int _{s}^{t}\int _E e^{-\int _{s}^{r}q(\tau,x)d\tau}U^{(n)}(r,t)F(y)(Q_j(r,x,dy)
  - Q(r,x,dy))\,dr\Big|.
 \end{align*}
 The first integral can be estimated by using $|U_{j}^{(n)}(r,t)F(y)| \leq \Vert F \Vert_{\infty}$ and
 $q_j(r,x) \leq q^* := \sup _{j \geq 1}\sup _{(\tau,x) \in [s,t] \times B}\ Q_j(\tau,x,E)$ for each $r \in [s,t]$, which yields
 \begin{align*}
  I_1 &\leq \Vert F \Vert_{\infty} \int _{s}^{t}q_j(r,x)\Big| \int _{s}^{r}(q_j(\tau,x) - q(\tau,x))
  \,d\tau\Big| dr
  \\ &\leq \Vert F \Vert_{\infty} (t-s)^2 q^*\sup _{(\tau,x) \in [s,t]\times B} |q_j(\tau,x) - q(\tau,x)|.
 \end{align*}
 To estimate $I_2$ we need the following lemma.
 \begin{Lemma}
  For any $\e > 0$, $T > 0$ there exists a compact $A \subset E$ and $j_0 \geq 1$ such that
  \[
   \int _{0}^{T}Q_j(t,x,A^c)\,dt \leq \e, \quad x \in B, \quad j \geq j_0.
  \]
 \end{Lemma}

 \begin{proof}
  Since $Q$ has the localization property we can find a compact $A_1 \subset E$ such that
  \[
   \int _{0}^{T}Q(t,x,A_1^c)\,dt \leq \frac{\e}{2}, \quad x \in B.
  \]
  Choose compacts $A, A_2 \subset E$ such that $A_1 \subset \overset{\circ}{A_2} \subset A_2 \subset \overset{\circ}{A}
  \subset A$, since $(\overset{\circ}{A_2})^c$
  and  \break   $(\overset{\circ}{A})^c$ are closed there exists a continuous function $\varphi$ with $\one_{(\overset{\circ}{A})^c} \leq \varphi \leq \one_{(\overset{\circ}{A_2})^c}$.
  We obtain
  \[
   \int _{0}^{T}Q_j(t,x,A^c)\,dt \leq \int _{0}^{T}Q_j(t,x,(\overset{\circ}{A})^c)\,dt \leq
   \int _0^T \int _{E}\varphi(y)Q_j(t,x,dy)\,dt
  \]
  and by \eqref{GJP:08} there exists $j_0 \geq 1$ such that for $j \geq j_0$, $x \in B$ and $t \in [0,T]$
  \[
   \int _{E}\varphi(y) Q_j(t,x,dy) \leq \frac{\e}{2T} + \int _{E}\varphi(y)Q(t,x,dy).
  \]
  Therefore the assertion follows from
  \begin{align*}
   \int _{0}^{T}\int _{E}\varphi(y)Q_j(t,x,dy)\,dt &\leq \frac{\e}{2} + \int _{0}^{T}
   \int _{E}\varphi(y)Q(t,x,dy)\,dt
   \\ &\leq \frac{\e}{2} + \int _{0}^{T}Q(t,x,(\overset{\circ}{A_2})^c)\,dt
   \leq \frac{\e}{2} + \int _{0}^{T}Q(t,x,A_1^c)\,dt \leq \e.
  \end{align*}
 \end{proof}
 Take $A \subset E$ and $j_0 \geq 1$ as in above lemma, then for any $j \geq j_0$ and $x \in B$
 \begin{align*}
  I_2 &\leq \int _{s}^{t} \int _A |U_j^{(n)}(r,t)F(y) - U^{(n)}(r,t)F(y)|Q_j(r,x,dy)\,dr
  + 2\Vert F \Vert_{\infty} \int _{s}^{t}Q_j(r,x,A^c)\,dr
  \\ &\leq q^*\int _{s}^{t}\underset{y \in A}{\sup}\ |U_j^{(n)}(r,t)F(y) - U^{(n)}(r,t)F(y)|\,dr
  + 2 \Vert F \Vert_{\infty}  \int _{s}^{t}Q_j(r,x,A^c)\,dr
  \\ &\leq q^*\int _{s}^{t}\underset{y \in A}{\sup}\ |U_j^{(n)}(r,t)F(y) - U^{(n)}(r,t)F(y)|\,dr + 2 \Vert F \Vert_{\infty}\e.
 \end{align*}
 The integrand tends for each fixed $r \in [s,t]$ to zero as $j \to \infty$ and since
 \[
  \underset{y \in A}{\sup}\ |U_j^{(n)}(r,t)F(y) - U^{(n)}(r,t)F(y)| \leq 2 \Vert F \Vert_{\infty}
 \]
 also the integral tends to zero. Altogether this shows the assertion for $I_2$.
 For the last integral observe that $(r,x,y) \longmapsto e^{-\int _{s}^{r}q(\tau,x)d\tau}U^{(n)}(r,t)F(y)$ is continuous and moreover bounded by $\Vert F \Vert_{\infty}$.
 Therefore by \eqref{GJP:08} for any $r \in [s,t]$
 \[
  F_j(r,s,t) := \sup _{x \in B}\Big| \int _{E}e^{-\int _{s}^{r}q(\tau,x)d\tau}U^{(n)}(r,t)
  F(y)(Q_j(r,x,dy) - Q(r,x,dy))\Big| \to 0, \quad j \to \infty
 \]
 and since $F_j(r,s,t) \leq 2 \Vert F \Vert_{\infty} q^*$ we obtain the assertion by dominated convergence.
 The second assertion can be proved very similarly, here only $I_3$ should be estimated again.
\end{proof}

As a consequence we can show that $U_{\alpha}(s,t)$ satisfies a Chernoff product formula. That is $U_{\alpha}(s,t)$
can be approximated by evolution systems $U_{\alpha,n}(s,t)$ with piecewise constant (in time)
transition functions $Q_{n}$. More precisely, take for any $n \in \N$
a sequence $0 = t_{0}^{(n)} \leq t_{k}^{(n)} < t_{k+1}^{(n)}$ with $\sup _{k \geq 0}\ (t_{k+1}^{(n)} - t_{k}^{(n)}) \to 0$ as $n \to \infty$
and $t_k^{(n)} \longrightarrow \infty$, $k\to \infty$ for all $n \in \N$. Define piecewise constant transition functions by
\[
 Q_{n}(t,x,dy) = Q(t_k^{(n)},x,dy), \quad t_k^{(n)} \leq t < t_{k+1}^{(n)},\quad k \geq 0,
\]
then $Q_n$ is weakly continuous in $x$ for any fixed $t \geq 0$ and $n \geq 1$.
Denote by $U_{\alpha,n}(s,t)$ the regularized Feller evolutions on $C_b(E)$ constructed above, cf. Theorem \ref{GJPTH:00}.
For fixed $r \geq 0$ set $Q^r(x,dy) := Q(r,x,dy)$, then $Q^r$ is a weakly continuous transition function and its associated regularized
Feller evolution on $C_b(E)$ can be represented by a Feller semigroup $T_{\alpha,r}(t)$.
\begin{Lemma}\label{GJPTH:04}
 Let $F \in C_b(E)$ and for $0 \leq s \leq t$ choose $m_0, m_1 \geq 1$ such that
 \begin{align}\label{GJP:14}
  t_{m_0}^{(n)} \leq s < t_{m_0+1}^{(n)} < \cdots t_{m_1}^{(n)} \leq t < t_{m_1 +1}^{(n)}.
 \end{align}
 Then
 \begin{align}\label{GJP:15}
  U_{\alpha,n}(s,t)F(x) = T_{\alpha,t_{m_0}^{(n)}}(t_{m_0 + 1}^{(n)} -s)\cdots T_{\alpha,t_{m_1}^{(n)}}(t-t_{m_1}^{(n)})
  F \to U_{\alpha}(s,t)F, \quad n \to \infty
 \end{align}
 holds uniformly on compacts.
\end{Lemma}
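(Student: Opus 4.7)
The plan is to split the argument into two parts: first establish the product representation \eqref{GJP:15}, and then derive the convergence statement as an application of Lemma \ref{CONVERGENCELEMMA}.

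For the product representation, I would fix $n \geq 1$ and observe that on each subinterval $[t_k^{(n)}, t_{k+1}^{(n)})$ the transition function $Q_n(\cdot,x,\cdot)$ is time-homogeneous and equals $Q^{t_k^{(n)}}$. Substituting this into the iteration \eqref{GJP:03} shows that for $t_k^{(n)} \leq s \leq t < t_{k+1}^{(n)}$ the iterates $P_n^{(m)}(s,x;t,\cdot)$ depend only on the difference $t-s$ and coincide with the iterates defining the homogeneous regularized semigroup $T_{\alpha, t_k^{(n)}}$. Hence on each such subinterval $U_{\alpha,n}(s,t) = T_{\alpha, t_k^{(n)}}(t-s)$. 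Applying the evolution property $U_{\alpha,n}(s,r)U_{\alpha,n}(r,t) = U_{\alpha,n}(s,t)$ successively at the partition points in \eqref{GJP:14} yields \eqref{GJP:15}.

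For the convergence, the idea is to invoke Lemma \ref{CONVERGENCELEMMA} with the family $(Q_n)_{n \geq 1}$. The uniform boundedness condition follows from $q_n(t,x) \leq \sup_{\tau \in [0,T]}q(\tau,x)$, which is bounded on any compact $B \in \mathcal{C}_1$ by the local boundedness \eqref{GJP:11}. The convergence \eqref{GJP:08} is the key input, and here I would use the joint continuity of $(t,x) \longmapsto \int_E f(x,y) Q(t,x,dy)$ for $f \in C_b(E \times E)$ (which reduces to the weak continuity assumption by approximating $f$ uniformly on the compact $B \times A$ by finite sums $\sum_i g_i(x) h_i(y)$ with $g_i, h_i \in C_b(E)$, combined with the localization property \eqref{GJP:07}). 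Since $(Q_n(t) f(x,\cdot))(x) = (Q(t_k^{(n)}) f(x,\cdot))(x)$ with $|t - t_k^{(n)}| \leq \sup_k (t_{k+1}^{(n)} - t_k^{(n)}) \to 0$, uniform continuity of the joint map on compacts delivers \eqref{GJP:08} uniformly on $B$.

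The main obstacle is a small technical mismatch: $Q_n$ is only continuous in $x$ for fixed $t$ and is merely piecewise constant in $t$, so Lemma \ref{GJPTH:00} does not directly apply to prove that $U_{\alpha,n}(s,t)$ is Feller in the joint variables. However, the product representation from the first step already exhibits $U_{\alpha,n}(s,t)F$ as a composition of homogeneous Feller semigroups evaluated at continuously varying times, so the Feller property of $U_{\alpha,n}$ follows automatically, and the rest of the proof of Lemma \ref{CONVERGENCELEMMA} carries over verbatim since it never uses joint continuity of $Q_j$ beyond what is needed for \eqref{GJP:08}. Should one wish to avoid this invocation altogether, an alternative route is to write $U_{\alpha,n}(s,t)F - U_\alpha(s,t)F$ as a telescoping sum over the partition and estimate each term via Duhamel's formula using that on each subinterval the kernel $Q_n - Q$ is $O(\sup_k(t_{k+1}^{(n)}-t_k^{(n)}))$ uniformly on compacts, with uniform control of the intermediate operators coming from $\Vert U_{\alpha,n}(s,t) \Vert_{C_b \to C_b} \leq 1$.
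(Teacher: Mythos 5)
Your proposal follows essentially the same route as the paper: verify \eqref{GJP:08} for the piecewise-constant kernels $Q_n$ by combining joint continuity of $(r,x)\longmapsto \int_E f(x,y)Q(r,x,dy)$ with uniform continuity on compacts and the vanishing mesh, invoke Lemma \ref{CONVERGENCELEMMA} to get $U_{\alpha,n}(s,t)F \to U_{\alpha}(s,t)F$ uniformly on compacts, and obtain the product formula from time-homogeneity of $Q_n$ on each subinterval (via the iteration \eqref{GJP:03}--\eqref{GJP:04}) together with the evolution property --- this is exactly the paper's argument. Two remarks. First, there is a slip in your parenthetical justification of the joint continuity: after approximating $f$ on $B \times A$ by sums $\sum_i g_i(x)h_i(y)$, the remainder estimate requires pointwise-in-$t$ smallness of $Q(t,x,A^c)$ uniformly over $(t,x)$ in a compact set, whereas the localization property \eqref{GJP:07} that you invoke only bounds the time-integrated quantity $\int_0^T Q(r,x,A^c)\,dr$ and so cannot deliver this. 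The correct input is uniform tightness of $\{Q(t,x,\cdot)\}$ over the compact, which follows from weak continuity via Portmanteau; this is precisely what the paper's appendix Lemma \ref{LEMMAFELLER2} encapsulates, and the paper's proof simply cites that lemma --- you could do the same instead of rebuilding it with the wrong tool. Second, your observation that $Q_n$ is only weakly continuous in $x$ for fixed $t$ (being piecewise constant in time), so that the literal hypothesis of Lemma \ref{CONVERGENCELEMMA} fails, is a legitimate point the paper glosses over; your resolution --- the product representation already exhibits $U_{\alpha,n}(s,t)F$ as a composition of Feller semigroups, so the Feller property needed in place of Lemma \ref{GJPTH:00} holds, and the induction in Lemma \ref{CONVERGENCELEMMA} uses continuity of $Q_j$ only through \eqref{GJP:08} and the uniform bounds --- is correct and in fact patches a small gap in the paper's own application. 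With the \eqref{GJP:07} citation replaced by Lemma \ref{LEMMAFELLER2} (or by the Portmanteau argument), your proof is complete; the telescoping Duhamel alternative you sketch would also work but is unnecessary.
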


\begin{proof}
 First observe that for any compact $B \subset E$, $T > 0$ and $f \in C_b(E \times E)$ Lemma~\ref{LEMMAFELLER2} implies that
 $F(r,x) := \int _{E}f(x,y)Q(r,x,dy)$ is continuous. Therefore
 \[
  \int _{E}f(x,y)Q_n(r,x,dy) \longrightarrow \int _{E}f(x,y)Q(r,x, dy), \quad n \to \infty
 \]
 holds uniformly in $(x,r) \in B \times [0,T]$ and hence \eqref{GJP:08} follows. Applying Lemma \ref{CONVERGENCELEMMA} we obtain for all $F \in C_b(E)$ and
 $0 \leq s \leq t$: $U_{\alpha,n}(s,t)F \to U_{\alpha}(s,t)F$ as $n \to \infty$ uniformly on compacts.
 By the evolution system property it follows that
 \[
  U_{\alpha,n}(s,t) = U_{\alpha}(s,t_{m_0+1}^{(n)})\cdots U_{\alpha}(t_{m_1}^{(n)}, t)
 \]
 holds. For each pair $r_0 < r_1$ with $t_{m}^{(n)} \leq r_0 < r_1 \leq t_{m+1}^{(n)}$ for some $m \geq 1$, by \eqref{GJP:03} and \eqref{GJP:04} it follows that
 $U_{\alpha,n}(r_0,r_1) = T_{\alpha,t_{m}^{(n)}}(r_1 -r_0)$ and hence
 \[
  U_{\alpha,n}(s,t) = T_{\alpha,t_{m_0}^{(n)}}(t_{m_0+1}^{(n)}-s)\cdots T_{\alpha,t_{m_1}^{(n)}}(t - t_{m_1}^{(n)})
 \]
 implies the assertion.
\end{proof}
In the following we consider the limit $\alpha \to 1$ and deduce from that $U(s,t)1 = 1$.
\begin{Theorem}
 The evolution system $U(s,t)$ is conservative and can be extended to $BM_V(E)$ so that
 \begin{align*}
 \Vert U(s,t)F\Vert_V \leq e^{\int _{s}^{t}c(r)dr} \Vert F \Vert_V, \quad 0 \leq s \leq t.
 \end{align*}
\end{Theorem}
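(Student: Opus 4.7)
The plan is to establish, in order, (a) the Foster-Lyapunov bound $U(s,t) V(x) \leq e^{\int_s^t c(r)dr} V(x)$, (b) conservativity $U(s,t) 1 = 1$, and (c) the extension to $BM_V(E)$ with the stated norm estimate. The key intermediate object is the regularized evolution $U_\alpha(s,t)$, for which I first prove the Lyapunov bound by bare-handed iteration and then pass $\alpha \uparrow 1$ by monotone convergence.

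For the Lyapunov bound at level $\alpha \in (0,1)$, I integrate the iteration \eqref{GJP:03} against $V$ and set $G_N(s,t;x) := \sum_{n=0}^{N}\alpha^n \int_E V(y) P^{(n)}(s,x;t,dy) \in [0,\infty]$. The recursion
\[
G_N(s,t;x) = V(x)e^{-\int_s^t q(r,x)dr} + \alpha \int_s^t \int_E e^{-\int_s^r q(\tau,x)d\tau} G_{N-1}(r,t;y)\, Q(r,x,dy)\, dr,
\]
the Foster-Lyapunov hypothesis \eqref{GJP:12}, and the antiderivative identity
\[
\int_s^t e^{-\int_s^r q(\tau,x)d\tau}(c(r)+q(r,x)) e^{\int_r^t c(\sigma)d\sigma}\, dr = e^{\int_s^t c(r)dr} - e^{-\int_s^t q(r,x)dr}
\]
(which is just $-\int_s^t \frac{d}{dr}\bigl[e^{-\int_s^r q(\tau,x)d\tau} e^{\int_r^t c(\sigma)d\sigma}\bigr] dr$) yield by induction on $N$
\[
G_N(s,t;x) \leq V(x)\bigl[(1-\alpha)e^{-\int_s^t q(r,x)dr} + \alpha e^{\int_s^t c(r)dr}\bigr] \leq V(x) e^{\int_s^t c(r)dr},
\]
the final step using $e^{-\int_s^t q(r,x)dr} \leq 1 \leq e^{\int_s^t c(r)dr}$. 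Monotone convergence in $N$ gives $U_\alpha(s,t) V(x) \leq e^{\int_s^t c(r)dr} V(x)$, and since $P_\alpha(s,x;t,\cdot)$ is pointwise non-decreasing in $\alpha$ with limit $P(s,x;t,\cdot)$, a second monotone limit $\alpha \uparrow 1$ delivers $U(s,t) V(x) \leq e^{\int_s^t c(r)dr} V(x)$.

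For conservativity I feed this finiteness into the forward equation \eqref{GJP:01} along an exhaustion $A_k \uparrow E$ by compacts: the hypothesis $q(t,y) \leq a(T) V(y)$ together with the previous step makes both integrands on the right-hand side dominated by $a(T) V(y)$, which is $P(s,x;r,dy)$-integrable uniformly for $r \in [s,t]$. Dominated convergence in $k$ collapses the pair of terms $-\int_{A_k} q\, dP + \int_E Q(\cdot,A_k)\, dP$ to $-\int_E q\, dP + \int_E q\, dP = 0$, hence $P(s,x;t,E) - 1 = 0$.

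Finally, for $F \in BM_V(E)$ I define $U(s,t) F(x) := \int_E F(y) P(s,x;t,dy)$; the integral converges absolutely since $|F| \leq \Vert F \Vert_V (1+V)$, and
\[
|U(s,t) F(x)| \leq \Vert F \Vert_V \bigl(U(s,t) 1(x) + U(s,t) V(x)\bigr) \leq \Vert F \Vert_V \bigl(1 + e^{\int_s^t c(r)dr} V(x)\bigr) \leq e^{\int_s^t c(r)dr} \Vert F \Vert_V (1+V(x)),
\]
where the last inequality uses $c \geq 0$, yielding \eqref{GJP:13}. The evolution-system identity $U(s,r) U(r,t) = U(s,t)$ on $BM_V$ transfers directly from Chapman-Kolmogorov for $P$. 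The main technical hurdle is the integration-by-parts identity driving the Lyapunov estimate; once that is in place, everything else reduces to monotone or dominated convergence applied to explicit non-negative expressions.
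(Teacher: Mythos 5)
Your proof is correct, and it reaches the theorem by a genuinely different route than the paper. For the Lyapunov bound the paper does not iterate the Feller series by hand: it invokes the Chernoff product formula of Lemma \ref{GJPTH:04}, writing $U_{\alpha,n}(s,t)$ as a product of time-frozen regularized semigroups $T_{\alpha,t_k^{(n)}}(\cdot)$, bounds each factor through the time-homogeneous estimate $T_r(t)V \leq e^{c(r)t}V$ imported from \cite{MU-FA04, KOL06}, applies this to the truncations $V_m = V \wedge m \in C_b(E)$, and then passes successively $n \to \infty$, $m \to \infty$, $\alpha \to 1$ to obtain \eqref{GJP:02}. Your integrating-factor induction $G_N(s,t;x) \leq V(x)\bigl[(1-\alpha)e^{-\int_s^t q(r,x)dr} + \alpha e^{\int_s^t c(r)dr}\bigr]$ arrives at the same inequality \eqref{GJP:02} self-containedly from \eqref{GJP:03} and \eqref{GJP:12}, without the product formula or the external homogeneous theory; I checked the recursion for $G_N$, the base case, and the antiderivative identity, and they close correctly (the Tonelli exchanges are harmless since all terms are nonnegative, and $q(\cdot,x)$, $c$ are continuous so the integrating factor is absolutely continuous). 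For conservativity the paper cites Feller's criterion from \cite{FELLER40} — that $\int_s^t \int_E q(r,y)P^{(n)}(s,x;r,dy)\,dr \to 0$ as $n \to \infty$ — and verifies it because the sum over $n$ of these terms equals $\int_s^t \int_E q(r,y)P(s,x;r,dy)\,dr$, which is finite by condition 1 ($q \leq a(T)V$) and \eqref{GJP:02}; you instead extract $P(s,x;t,E) = 1$ directly from the integrated form of the forward equation \eqref{GJP:01} along a compact exhaustion $A_k \uparrow E$, with the same finiteness guaranteeing the two integral terms cancel rather than producing $\infty - \infty$. Two small points worth making explicit in your write-up: the exhaustion exists because a locally compact Polish space is $\sigma$-compact, and the integrated form of \eqref{GJP:01} is available because $P$ is absolutely continuous in $t$ (as recalled from \cite{FMS14}; the paper itself uses this integrated form in its proof of the forward Kolmogorov equation). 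On balance, both conservativity arguments pivot on the identical finiteness statement, so the substantive divergence is in how the Lyapunov estimate is produced: yours is more elementary and fully self-contained, while the paper's reuses machinery (the product formula of Lemma \ref{GJPTH:04} and the known time-homogeneous bounds) that it has on hand anyway; your extension step to $BM_V(E)$ and the norm bound \eqref{GJP:13} coincide with the paper's.
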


\begin{proof}
 Denote by $T_{\alpha,r}(t)$ the regularized semigroups with piecewise constant (in the time variable) transition functions, see Theorem \ref{GJPTH:04}
 and by $T_{r}(t)$ their counterparts with $\alpha=1$. Then $T_{\alpha,r}(t)V(x) \leq T_r(t)V(x)$.
 The moment condition \eqref{GJP:12} and the results obtained in  \cite{MU-FA04, KOL06} imply that for any $r \geq 0$, $x \in E$ and $t \geq 0$
 \[
  T_{r}(t)V(x) \leq e^{c(r)t}V(x).
 \]
 Now given $0 \leq s < t$ and $n \in \N$ we can find $m_0,m_1 \geq 0$ with \eqref{GJP:14}. For $m \geq 0$ let $V_m(x) := V(x)\wedge m$,
 then $V_m \in C_b(E)$ and hence
 \begin{align*}
  U_{\alpha,n}(s,t)V_m(x) &= T_{\alpha,t_{m_0}^{(n)}}(t_{m_0+1}^{(n)}-s)\cdots T_{\alpha,t_{m_1}^{(n)}}(t - t_{m_1}^{(n)})V_m(x)
  \\ &\leq T_{t_{m_0}^{(n)}}(t_{m_0+1}^{(n)}-s)\cdots T_{t_{m_1}^{(n)}}(t - t_{m_1}^{(n)})V(x)
  \\ &\leq V(x)\exp \left( c(t_{m_1}^{(n)})(t -t_{m_1}^{(n)}) + \dots + c(t_{m_0}^{(n)})(t_{m_0+1}^{(n)}-s)\right).
 \end{align*}
 Letting $n\to \infty$ yields
 \[
  U_{\alpha}(s,t)V_m(x) \leq V(x)\exp\Big( \int _{s}^{t}c(r)\,dr\Big).
 \]
 The sequence $(U_{\alpha}(s,t)V_m(x))_{m \in \N}$ is increasing and bounded, so by monotone convergence it follows that
 \begin{align}\label{GJP:02}
  \int _{E}V(x)P_{\alpha}(s,x;t,dy) \leq V(x)\exp\Big( \int _{s}^{t}c(r)\,dr\Big)
 \end{align}
 is satisfied.
 The right-hand side is increasing in $\alpha$, hence taking the limit $\alpha \to 1$ yields that $U(s,t)$ can be extended to $BM_V(E)$.
 By \cite{FELLER40} the evolution system $U(s,t)$ is conservative if and only if for any $s < t$, $x \in E$
 \[
  \int _{s}^{t}\int _{E}q(r,y)P^{(n)}(s,x;r,dy)\,dr \longrightarrow 0, \quad n \to \infty.
 \]
 Let $T > 0$ such that $[s,t] \subset [0,T]$, then by $q(r,y) \leq a(T)V(y)$ for $r \in [s,t]$ and \eqref{GJP:02} with $\alpha = 1$
 \[
  \int _{s}^{t}\int _{E}q(r,y)P(s,x;r,dy)\,dr \leq a(T)V(x) \int _{s}^{t} \exp\Big( \int
  _{s}^{r}c(\tau)\,d\tau\Big)dr < \infty
 \]
 follows.
 The assertion follows from the representation $P(s,x;r,dy) = \sum _{n=0}^{\infty}P^{(n)}(s,x;r,dy)$.
\end{proof}
The next result shows that $U(s,t)$ is differentiable in $s$.
\begin{Theorem}\label{GJPTH:02}
 For any $F \in BM(E)$, $t > 0$ and $x \in E$, $[0,t] \ni s \longmapsto U(s,t)F(x)$ is continuously differentiable and a solution to
   \begin{align*}
    \frac{\partial}{\partial s}U(s,t)F(x) = - L(s)U(s,t)F(x).
   \end{align*}
  Moreover, for any $F \in C_V(E)$ the function $U(s,t)F(x)$ is absolutely continuous in $s$ and solves above equation a.e..
  Let $V(s,t)$ be a Feller evolution system on $C_b(E)$ and assume that $V(s,t)F$ is a solution to \eqref{GJP:16} for any $F \in C_b(E)$, then $V(s,t) = U(s,t)$
  is fulfilled.
\end{Theorem}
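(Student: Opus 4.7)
The starting point is the integral form of the backward Kolmogorov equation obtained by summing the iterations \eqref{GJP:03}:
\begin{equation*}
 P(s,x;t,A) = e^{-\int_s^t q(r,x)dr}\delta(x,A) + \int_s^t e^{-\int_s^r q(\tau,x)d\tau}\int_E P(r,y;t,A)Q(r,x,dy)\,dr.
\end{equation*}
Integrating against $F$ yields
\begin{equation*}
 U(s,t)F(x) = e^{-\int_s^t q(r,x)dr} F(x) + \int_s^t e^{-\int_s^r q(\tau,x)d\tau} G(r)\,dr, \quad G(r) := \int_E U(r,t)F(y)Q(r,x,dy).
\end{equation*}
For $F \in BM(E)$ one has $|G(r)| \leq \Vert F \Vert_\infty q(r,x)$, while for $F \in C_V(E)$ the Foster-Lyapunov estimate \eqref{GJP:12} together with \eqref{GJP:13} bounds $|G(r)|$ by $\Vert F \Vert_V e^{\int_r^t c(\tau)d\tau}(q(r,x)+c(r)V(x)+q(r,x)V(x))$; in either case $G$ is locally bounded in $r$. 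Equation \eqref{GJP:00}, integrated against $F$ and combined with Fubini, then shows directly that $s \mapsto U(s,t)F(x)$ is absolutely continuous on $[0,t]$ with a.e. derivative $-L(s)U(s,t)F(x)$; this already establishes the $C_V(E)$ assertion.

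To upgrade absolute continuity to continuous differentiability when $F \in BM(E)$, I would first treat $F \in C_b(E)$ by applying Leibniz's rule to the integral equation: joint continuity of $(r,y)\mapsto U(r,t)F(y)$ from the Feller property of Proposition \ref{GJPTH:01} (established in the preceding theorem via $\alpha \to 1$ in Lemma \ref{GJPTH:00}), combined with an argument in the same spirit as Lemma \ref{LEMMAFELLER2} used in the proof of Lemma \ref{GJPTH:00}, shows that $r\mapsto G(r)$ is continuous, so the formal identity
\begin{equation*}
 \partial_s U(s,t)F(x) = q(s,x)U(s,t)F(x) - G(s) = -L(s)U(s,t)F(x)
\end{equation*}
is rigorously justified and produces a continuous derivative. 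To pass from $C_b(E)$ to general $F \in BM(E)$, I would apply a functional monotone class argument: the class $\mathcal{H}$ of $F \in BM(E)$ for which the conclusion holds contains $C_b(E)$, is a vector space, and is closed under bounded pointwise limits (dominated convergence gives $U(s,t)F_n(x)\to U(s,t)F(x)$ and $L(s)U(s,t)F_n(x)\to L(s)U(s,t)F(x)$; the uniform bound $|\phi_n'(s)|\leq 2\Vert F \Vert_\infty q(s,x)$ lets one pass to the limit in $\phi_n(s)=\phi_n(s_0)+\int_{s_0}^s\phi_n'(r)\,dr$). The conclusion $\mathcal{H}=BM(E)$ then follows once the limiting derivative is verified to be continuous in $s$.

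For the uniqueness assertion, let $V(s,t)$ be any Feller evolution on $C_b(E)$ satisfying \eqref{GJP:16}. The Riesz representation theorem provides a sub-Markov transition function $\tilde P$ with $V(s,t)F(x)=\int_E F(y)\tilde P(s,x;t,dy)$, and testing \eqref{GJP:16} against $F \in C_b(E)$, followed by bounded-pointwise approximation of indicators, shows that $\tilde P$ satisfies the backward equation \eqref{GJP:00}. By the minimality statement recalled from \cite{FMS14} and conservativity of $P$ established above, $P \leq \tilde P$ and $\tilde P(s,x;t,E) \leq 1 = P(s,x;t,E)$ force $\tilde P = P$, hence $V = U$. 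The main technical obstacle I anticipate lies in the monotone class step: continuity in $s$ of $G(s) = \int_E U(s,t)F(y)Q(s,x,dy)$ for merely bounded measurable $F$ is not automatic, since $y\mapsto U(s,t)F(y)$ need not be continuous. Controlling this requires combining the pointwise continuity in $r$ of $U(r,t)F(y)$ (a consequence of the absolute continuity of $P$ in its time variables) with the tightness supplied by the localization property \eqref{GJP:07}, in the spirit of Lemma \ref{CONVERGENCELEMMA}.
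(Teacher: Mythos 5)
Your handling of the $C_V(E)$ assertion and of uniqueness tracks the paper's proof closely: the paper likewise integrates \eqref{GJP:00} against $F$ to obtain the integral identity \eqref{GJP:17}, deduces absolute continuity from the integrability of $L(r)U(r,t)F(x)$ supplied by \eqref{GJP:12} and \eqref{GJP:13}, and proves uniqueness exactly along your lines --- a kernel representation of the competing evolution (the paper quotes \cite[Chapter 2, Theorem 2.9]{VANCASTEREN11} rather than Riesz), extension of the integral equation from $C_b(E)$ to $BM(E)$, reduction to \eqref{GJP:00} via indicators $\one_A$, and then minimality of $P$ from \cite{FMS14} together with conservativity forcing $\widetilde{P}=P$. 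Your Duhamel form of the integral equation is equivalent to the paper's integrated version of \eqref{GJP:00}, and your differentiation of it is correct.

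The genuine gap is in the first assertion for general $F\in BM(E)$, and your monotone class device cannot close it. The class $\mathcal{H}$ of functions for which the \emph{full} conclusion (continuous differentiability) holds is not closed under bounded pointwise limits: your own limiting argument, passing to the limit in $\phi_n(s)=F_n(x)-\int_s^t \phi_n'(r)\,dr$ by dominated convergence, delivers in the limit only the integral identity \eqref{GJP:17}, i.e.\ absolute continuity with a.e.\ derivative $-L(s)U(s,t)F(x)$ --- the weaker $C_V(E)$-type conclusion, not $C^1$ regularity, since continuity of derivatives does not survive pointwise limits. So the functional monotone class theorem is inapplicable to $\mathcal{H}$ as you defined it, and, as you concede in your last paragraph, you are left needing continuity of $s\mapsto \int_E U(s,t)F(y)Q(s,x,dy)$ for merely measurable $F$, which your plan does not produce. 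The paper takes no detour through $C_b(E)$ at all: since \eqref{GJP:00} holds for every Borel set $A$, integrating against $F$ gives \eqref{GJP:17} pointwise for every $F\in BM(E)$ directly, and the remaining point --- continuity in $r$ of $q(r,x)U(r,t)F(x)$ and of $\int_E U(r,t)F(y)Q(r,x,dy)$ --- is settled by appeal to Lemma \ref{LEMMAFELLER2}. It is fair to note that your closing worry is aimed at precisely the delicate spot: Lemma \ref{LEMMAFELLER2} presupposes joint continuity of the integrand in $(r,y)$, which the Feller property furnishes for $F\in C_b(E)$ but not for general $F\in BM(E)$ (already $U^{(0)}(r,t)F(y)=F(y)e^{-\int_r^t q(\tau,y)d\tau}$ is discontinuous in $y$), so the paper's invocation there is terse; but your sketched repair also stalls, since after splitting you still face $\int_E U(r,t)F(y)\,(Q(r_n,x,dy)-Q(r,x,dy))$, and weak continuity of $Q$ tests only against continuous integrands. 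As submitted, your proposal establishes the absolutely continuous version on all of $BM(E)$ and the $C^1$ statement only on $C_b(E)$.
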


\begin{proof}
 By \eqref{GJP:00} we obtain for any $A \in \mathcal{B}(E)$ and $0 \leq s \leq t$
 \[
  P(s,x;t,A) = \delta(x,A) + \int _{s}^{t}q(r,x)P(r,x;t,A)\,dr - \int _{s}^{t}\int
  _{E}P(r,y;t,A)Q(r,x,dy)\,dr
 \]
 and hence for any $F \in BM(E)$ and $x \in E$
 \[
  U(s,t)F(x) = F(x) + \int _{s}^{t}q(r,x)U(r,t)F(x)\,dr - \int _{s}^{t}\int _{E}U(r,t)F(y)Q(r,x,dy)\,dr
 \]
 follows.
 Clearly $q(r,x)U(r,t)F(x)$ and by Lemma \ref{LEMMAFELLER2} also $\int _{E}U(r,t)F(y)Q(r,x,dy)$ are continuous in $r$,
 which implies that $L(r)U(r,t)F(x)$ is continuous in $(r,t)$. Therefore
 \begin{align}\label{GJP:17}
  U(s,t)F(x) = F(x) - \int _{s}^{t}L(r)U(r,t)F(x)\,dr
 \end{align}
 implies \eqref{GJP:16}. If $F \in C_V(E)$, then $U(s,t)F(x)$ is bounded and measurable in $(s,t)$.
 Hence by \eqref{GJP:12} $L(r)U(r,t)F(x)$ is well-defined and integrable w.r.t. $r$. In view of \eqref{GJP:17}
 it follows that $s \longmapsto U(s,t)F(x)$ is absolutely continuous and satisfies \eqref{GJP:16} for any $x \in E$.

Now let $V(s,t)$ be a Feller evolution on $C_b(E)$ which satisfies
\eqref{GJP:16}. By \cite[Chapter 2, Theorem 2.9]{VANCASTEREN11}
$V(s,t)$ is given by
 \[
  V(s,t)F(x) = \int _{E}F(y)\widetilde{P}(s,x;t,dy), \quad x \in E, \quad 0 \leq s \leq t,
 \]
 where $\widetilde{P}$ is a transition probability function. Moreover, this evolution system satisfies \eqref{GJP:17} for any $F \in C_b(E)$ and hence by
 approximation also for any $F \in BM(E)$. Therefore for any $F = \one_A$, $A \in \mathcal{B}(E)$ it solves equation \eqref{GJP:16} which is simply \eqref{GJP:00}.
 The minimality of $P$ implies $P \leq \widetilde{P}$ and hence $U(s,t)F \leq V(s,t)F$.
 Since $U(s,t)$ is conservative it follows that $P(s,x;t,dy)$ is the unique solution to \eqref{GJP:00}, i.e. $P(s,x;t,dy) = \widetilde{P}(s,x;t,dy)$.
\end{proof}

\begin{Theorem}
 Let $F \in BM(E)$, then for any $x \in E$ and $s \geq 0$, $[s, \infty) \ni t \longmapsto U(s,t)F(x)$ is absolutely continuous and satisfies for a.a. $t \geq s$
  \begin{align*}
   \frac{\partial}{\partial t}U(s,t)F(x) = U(s,t)L(t)F(x).
  \end{align*}
  Let $V(s,t)$ be a Feller evolution system on $C_b(E)$ and assume that $V(s,t)F$ is for any $F \in C_b(E)$ a solution to \eqref{GJP:20}, then $V(s,t) = U(s,t)$ holds.
\end{Theorem}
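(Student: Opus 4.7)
The plan is to mirror the proof of Theorem \ref{GJPTH:02}, replacing the backward equation \eqref{GJP:00} by its forward analogue \eqref{GJP:01}. First, I would integrate \eqref{GJP:01} in $t$: since $t \mapsto P(s,x;t,A)$ is absolutely continuous for every compact $A$, one has
\[
 P(s,x;t,A) = \one_A(x) + \int_s^t \Big[ \int_E Q(r,y,A) P(s,x;r,dy) - \int_A q(r,y) P(s,x;r,dy) \Big] dr.
\]
Noting that $Q(r,y,A) - q(r,y)\one_A(y) = L(r)\one_A(y)$ and applying Fubini, this rewrites as
\[
 U(s,t)\one_A(x) = \one_A(x) + \int_s^t U(s,r) L(r) \one_A(x)\, dr, \qquad A \in \mathcal{C}.
\]

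Next, I would extend this identity from compact $A$ to arbitrary $F \in BM(E)$ by a monotone class argument: the family $\mathcal{D}$ of Borel sets $A$ for which the identity holds contains the $\pi$-system of compact sets (which generates $\mathcal{B}(E)$ since $E$ is $\sigma$-compact), contains $E$, is closed under complements because $L(r)1 = 0$, and is closed under countable monotone limits by monotone convergence. Dynkin's $\pi$-$\lambda$ theorem then yields $\mathcal{D} = \mathcal{B}(E)$, and linearity followed by uniform approximation of $F \in BM(E)$ by simple functions gives
\[
 U(s,t)F(x) = F(x) + \int_s^t U(s,r) L(r) F(x)\, dr.
\]
The Foster--Lyapunov bound $q(r,\cdot) \leq a(T)V$ combined with the moment estimate \eqref{GJP:13} yields, for $r \in [s,t] \subset [0,T]$,
\[
 |U(s,r) L(r) F(x)| \leq 2 \Vert F \Vert_\infty a(T) V(x) e^{\int_s^r c(\tau)\,d\tau},
\]
so the integrand is locally integrable; hence $t \mapsto U(s,t)F(x)$ is absolutely continuous and satisfies \eqref{GJP:20} almost everywhere.

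For the uniqueness claim, let $V(s,t)$ be a Feller evolution on $C_b(E)$ such that $V(s,t)F$ solves \eqref{GJP:20} for every $F \in C_b(E)$. As in the proof of Theorem \ref{GJPTH:02}, \cite[Chapter~2, Theorem~2.9]{VANCASTEREN11} represents $V(s,t)F(x) = \int_E F(y) \widetilde{P}(s,x;t,dy)$ for some transition probability function $\widetilde{P}$, and the associated integral equation extends from $C_b(E)$ to $BM(E)$ by dominated convergence. Specializing to $F = \one_A$ for compact $A$ shows that $\widetilde{P}$ solves \eqref{GJP:01}; the minimality of $P$ recalled after that equation then yields $P(s,x;t,A) \leq \widetilde{P}(s,x;t,A)$ for compact $A$, and hence for every Borel $A$ by inner regularity of Borel measures on the Polish space $E$. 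Conservativeness of $U$ forces $P(s,x;t,E) = 1 = \widetilde{P}(s,x;t,E)$, so the two measures must coincide and $V = U$. The main obstacle is the passage from the compact-set formulation of \eqref{GJP:01} to the integral identity for arbitrary $F \in BM(E)$, which requires combining the monotone class extension with Fubini and the moment bound in a consistent order.
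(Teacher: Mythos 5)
Your proposal follows essentially the same route as the paper's own proof: integrate the forward equation \eqref{GJP:01} over compact sets, use the moment bound (your estimate is exactly the paper's \eqref{GJP:18}, derived from $q(r,\cdot) \leq a(T)V$ and the $V$-moment estimate) to justify Fubini and the passage to general $F \in BM(E)$, and obtain uniqueness by the same van Casteren representation, minimality of $P$, and conservativeness argument used for the backward equation. The only difference is that you make explicit the monotone-class extension from indicators of compacts to $BM(E)$, a step the paper compresses into ``By \eqref{GJP:18} this implies'', so your write-up is correct and, if anything, slightly more detailed than the original.
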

\begin{proof}
 For all $0 \leq s \leq r \leq t < T$
 \begin{align}\label{GJP:18}
  \int _{E}q(r,y)P(s,x;t,dy) \leq a(T)\int _{E}V(y)P(s,x;t,dy) \leq a(T)V(x)e^{\int _{s}^{t}c(r)dr}
 \end{align}
 and \eqref{GJP:01} implies for any $0 \leq s \leq t$ and compact $A \subset E$
 \[
  P(s,x;t,A) = \delta(x,A) - \int _{s}^{t}\int _{A}q(r,y)P(s,x;r,dy)\,dr + \int _{s}^{t}
  \int _{E}Q(r,y,A)P(s,x;r,dy)\,dr.
 \]
 By \eqref{GJP:18} this implies
 \[
  U(s,t)F(x) = F(x) - \int _{s}^{t}\int _{E}q(r,y)F(y)P(s,x;r,dy)\,dr + \int _{s}^{t}
  \int _{E}Q(r)F(y)P(s,x;r,dy)\,dr
 \]
 and hence
 \begin{align*}
  U(s,t)F(x) = F(x) + \int _{s}^{t}\int _{E}L(r)F(y)P(s,x;r,dy)\,dr
 \end{align*}
 holds. The first assertion is proved. Uniqueness follows by the same arguments as for \eqref{GJP:16}.
\end{proof}

\begin{Remark}\label{GJPREMARK:02}
 It is worth noting that in the time-homogeneous case \eqref{GJP:16} and \eqref{GJP:20} are equivalent and less restrictive conditions are sufficient to
 show that $U(s,t)$ is an Feller evolution, see \cite{KOL06}.
\end{Remark}
Since $U(s,t)$ is given by a transition probability function we see that
for each $x \in E$ and $s \geq 0$ there exists a probability space $(\Omega, \mathcal{F}^{s}, \mathbb{P}_{s,x})$
and a conservative Markov process $(X(t))_{t \geq s}$ on this space such that
\[
 U(s,t)F(x) = \E_{s,x}(F(X(t)), \quad F \in C_b(E), \quad t \geq s.
\]
This process is considered w.r.t. its natural filtration defined by $\mathcal{F}_{\tau}^s = \sigma\left( X(t) \ | \ s \leq t \leq \tau \right)$
for $s \leq \tau$. Note that this process is, by construction, a pure jump process.
The next statement completes the proof of Proposition \ref{GJPTH:01}.
\begin{Corollary}\label{GJPTH:05}
 The following statements are true:
 \begin{enumerate}
  \item[]{1.} Let $F \in BM(E)$. Then for any fixed $s \geq 0$
  \[
   M_{s,F}(t) := F(X(t)) - F(X(s)) - \int _{s}^{t}L(r)F(X(r))\,dr, \quad t \geq s
  \]
  is a martingale with respect to $(\mathcal{F}_t^{s})_{t \geq s}$ and $\mathbb{P}_{s,x}$.
  \item[]{2.} For any $a > 0$, $x \in E$ and $0 \leq s < T$
  \[
   \mathbb{P}_{s,x}\Big( \sup _{t \in [s,T]} V(X(t)) \geq a \Big) \leq V(x) \frac{e^{\int _{s}^{T}c(r)dr}}{a}
  \]
  holds.
  \item[]{3.} $U(s,t)$ is a Feller evolution system.
 \end{enumerate}
\end{Corollary}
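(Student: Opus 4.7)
For (1), my approach is a direct Dynkin-style martingale computation based on the forward Kolmogorov equation \eqref{GJP:20}. Integrating it gives, for any $F \in BM(E)$ and $y \in E$,
\begin{align*}
U(\tau,t)F(y) = F(y) + \int_\tau^t U(\tau,r)L(r)F(y)\,dr, \qquad s \leq \tau \leq t.
\end{align*}
The integrand is pathwise integrable because $|L(r)F(y)| \leq 2\|F\|_\infty q(r,y) \leq 2\|F\|_\infty a(T)V(y)$, which combined with \eqref{GJP:13} supplies an integrable upper bound along the path of $X$. I then take $\mathbb{E}_{s,x}[\cdot \mid \mathcal{F}_\tau^s]$, evaluate at $y = X(\tau)$ via the Markov identity $\mathbb{E}_{s,x}[F(X(t))\mid \mathcal{F}_\tau^s] = U(\tau,t)F(X(\tau))$, and rewrite $U(\tau,r)L(r)F(X(\tau))$ as $\mathbb{E}_{s,x}[L(r)F(X(r))\mid \mathcal{F}_\tau^s]$ by a second application of the Markov property together with Fubini. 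Rearranging yields $\mathbb{E}_{s,x}[M_{s,F}(t) \mid \mathcal{F}_\tau^s] = M_{s,F}(\tau)$.

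For (2), I will use the exponential-Lyapunov supermartingale
\begin{align*}
N(t) := V(X(t))\exp\Big(-\int_s^t c(r)\,dr\Big), \qquad t \geq s.
\end{align*}
The moment bound \eqref{GJP:02} (with $\alpha = 1$) and the Markov property give
\begin{align*}
\mathbb{E}_{s,x}[N(t) \mid \mathcal{F}_\tau^s] = e^{-\int_s^t c(r)dr}\, U(\tau,t)V(X(\tau)) \leq V(X(\tau))\,e^{-\int_s^\tau c(r)dr} = N(\tau),
\end{align*}
so $N$ is a nonnegative supermartingale with $\mathbb{E}_{s,x}[N(s)] = V(x)$ (one first replaces $V$ by $V \wedge m$ and then passes to the limit by monotone convergence if $V$ is unbounded). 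Since $c \geq 0$, on $[s,T]$ the inclusion $\{V(X(t)) \geq a\} \subset \{N(t) \geq a\,e^{-\int_s^T c(r)dr}\}$ and Doob's maximal inequality together deliver the claimed bound.

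For (3), all that remains beyond the conservativity already established is joint continuity of $(s,t,x)\mapsto U(s,t)F(x)$ for $F\in C_b(E)$. My plan is to compare $U$ with the regularized evolutions $U_\alpha(s,t)$, each jointly continuous by Lemma \ref{GJPTH:00}. Positivity of the kernels and $U_\alpha \leq U$ give
\begin{align*}
|U(s,t)F(x) - U_\alpha(s,t)F(x)| \leq \|F\|_\infty (1 - U_\alpha(s,t)1(x)),
\end{align*}
where $1 - U_\alpha(s,t)1 \in C_b$ decreases monotonically in $\alpha$ to $1 - U(s,t)1 \equiv 0$ by conservativity. Dini's theorem then forces this decrease of continuous functions to the continuous limit $0$ to be uniform on each compact subset of $\{(s,t,x):0\leq s\leq t\}$, and the uniform-on-compacts limit of the continuous functions $U_\alpha(s,t)F$ is therefore continuous.

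The hard step is (3): it relies crucially on conservativity having already been secured by the Foster-Lyapunov estimate, because only then does the Dini limit collapse to the constant $0$ and permit the uniform-on-compacts approximation one needs. Parts (1) and (2) are comparatively routine once the right martingales have been identified and the mild integrability input from \eqref{GJP:13} and \eqref{GJP:02} is in place.
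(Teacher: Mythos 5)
Your three arguments are all correct, but in parts (2) and (3) you take a genuinely different route from the paper; only (1) coincides with it (the paper likewise conditions via the Markov property on the integrated form of \eqref{GJP:20}, writing $\E_{\tau,X(\tau)}(L(r)F(X(r))) = U(\tau,r)L(r)F(X(\tau)) = \tfrac{\partial}{\partial r}U(\tau,r)F(X(\tau))$; you merely make the Fubini/integrability step explicit). For (2) the paper does not work with the global supermartingale $N$ at all: it localizes, introducing $E_n = \{V < n\}$, the exit times $\tau_n$, cut-off kernels $Q_n(t,x,dy) = \varphi_n(x)Q(t,x,dy)$ with \emph{bounded} generators $L_n$ and auxiliary processes $X^n$, checks $\tfrac{\partial}{\partial t}g + L(t)g \leq 0$ for $g(t,x) = e^{-\int_s^t c(r)dr}V(x)$, verifies via Dynkin's formula that $M_t^n = e^{-\int_s^t c(\sigma)d\sigma}V(X_t^n)\one_{t<\tau_n}$ is a supermartingale, applies Doob, and only then lets $n \to \infty$ using conservativity ($\tau_n \to \infty$). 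Your shortcut --- deducing the supermartingale property of $N$ directly from the moment bound \eqref{GJP:02} at $\alpha = 1$ together with the Markov property, justified for unbounded $V$ by truncation $V \wedge m$ and conditional monotone convergence --- is legitimate and shorter, precisely because the analytic heavy lifting already sits in the conservativity theorem; like the paper, you tacitly use right-continuity of paths when invoking Doob's maximal inequality over $[s,T]$, which is available since the process is a pure-jump Hunt process. For (3) the paper again exploits the localization: by the coupling $(X_t)_{t<\tau_n} = (X_t^n)_{t<\tau_n}$ one gets $|U(s,t)F(x) - U_n(s,t)F(x)| \leq 2\Vert F\Vert_\infty \mathbb{P}_{s,x}(\tau_n \leq t)$, and the tail estimate $\mathbb{P}_{s,x}(\tau_n \leq t) \leq V(x)e^{\int_s^t c(r)dr}/n$ from part (2) gives locally uniform convergence of the Feller evolutions $U_n$ --- so the paper's (3) consumes its (2). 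Your alternative via the $\alpha$-regularizations is sound and independent of (2): since $P - P_\alpha = \sum_{n}(1-\alpha^n)P^{(n)} \geq 0$ is a positive measure of total mass $1 - U_\alpha(s,t)1(x)$ (using $P(s,x;t,E)=1$), the estimate $|U(s,t)F - U_\alpha(s,t)F| \leq \Vert F\Vert_\infty(1 - U_\alpha(s,t)1)$ holds, the right side decreases to $0$ as $\alpha \uparrow 1$, and Dini along any sequence $\alpha_k \uparrow 1$ upgrades this to uniform convergence on compacts of the continuous functions furnished by Lemma \ref{GJPTH:00}. What the paper's route buys is a single localization apparatus serving both (2) and (3), plus the quantitative tail bound on $\tau_n$ of independent interest (it is reused in the remark after Theorem \ref{IPSFINITETH:03}); what yours buys is economy: (3) needs only conservativity and Lemma \ref{GJPTH:00}, and (2) dispenses with the auxiliary processes $X^n$ altogether.
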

\begin{proof}
 1. This follows by the Markov property and the relation
 \[
  \E_{\tau,X(\tau)}(L(r)F(X(r))) = (U(\tau,r)L(r)F)(X(\tau)) = \frac{\partial}{\partial r}U(\tau,r)F(X(\tau)),
 \]
 where $0 \leq s \leq \tau \leq t$.
 \\2. Let $E_n := \{ x \in E \ | \ V(x) < n \}$, fix $s \geq 0$ and define a family of stopping times
 \[
  \tau_n := \inf \{ t \geq s \ | \ X_t \not \in E_n \}.
 \]
 Let $\varphi_n \in C(E)$ be such that $\one_{\overline{E}_n} \leq \varphi_n \leq \one_{\overline{E}_{n+1}}$
 and define a new transition function by $Q_n(t,x,dy) := \varphi_n(x)Q(t,x,dy)$.
 Then
 \[
  L_n(t)F(x) := -\varphi_n(x)q(t,x)F(x) + \int _{E}F(y)\varphi_n(x)Q(t,x,dy) = \varphi_n(x)L(t)F(x)
 \]
 determines a bounded linear operator on $C_b(E)$ and $BM_V(E)$.
 Hence there exists an associated conservative Feller evolution system $U_n(s,t)$ on $C_b(E)$. This evolution system can be extended to $BM_V(E)$.
 Let $(X_t^n)_{t \geq 0}$ be the corresponding Markov process, and denote by $(\mathcal{F}_{t,n}^{s})_{t \geq s}$ its associated natural filtration.
 By construction it follows for $x \in E_n$ and $n \geq 1$ that these processes satisfy
 \begin{align}\label{GJP:25}
  (X_t)_{t < \tau_n} = (X_t^n)_{t < \tau_n}
 \end{align}
 in the sense of finite dimensional distributions. For $s \geq 0$ let $g(t,x):= e^{-\int _{s}^{t}c(r)dr}V(x)$.
 A short computation shows that
 \[
  \frac{\partial}{\partial t}g(t,x) + L(t)g(t,x) \leq 0.
 \]
 Then
 \[
  M_n(s,t) := g(t,X^n(t)) - g(s,X^n(s)) - \int _{s}^{t}\left(\frac{\partial }{\partial r} + L_n(r)\right)
  g(r,X^n(r))\,dr, \quad t \geq s
 \]
 is a $\mathcal{F}_{t,n}^s$-martingale w.r.t. $\mathbb{P}_{s,x}$. Fix $x \in E_n$, $n \geq 1$, hence by Dynkin's formula
 \begin{align}\label{GJP:21}
  \E_{s,x}(g( t \wedge \tau_n, X_{t \wedge \tau_n}^n)) &= g(s,x) + \E_{s,x}\Bigg( \int _{s}^{t
  \wedge \tau_n}\left( \frac{\partial }{\partial r} + L_n(r)\right)g(r,X_r^n)\,dr\Bigg)
  \leq g(s,x)
 \end{align}
 holds. Here $\frac{\partial}{\partial r}$ acts only on the first variable of $g$.
 Let $M_{t}^n := e^{- \int _s^{t}c(\sigma)d\sigma} V(X_t^n)\one_{t < \tau_n}$, we will show that $(M_{t}^n)_{t \geq s}$ is a supermartingale.
 Fix $s \leq r \leq t$. On $\{ r \geq \tau_n \} \in \mathcal{F}_{r,n}^s$ we have $M_{t}^n = M_{r}^n = 0$ and hence obtain
 \[
  \E_{s,x}(M_{t}^n| \mathcal{F}_{r,n}^s) = M_{r}^n = 0.
 \]
 On $\{r < \tau_n\}$ we have by the Markov property and \eqref{GJP:21}
 \begin{align*}
  \E_{s,x}(M_{t}^n | \mathcal{F}_{r,n}^s) &= e^{-\int _{s}^{t}c(\sigma)d\sigma} \E_{r, X_r^n}(V(X_{t}^n)\one_{t < \tau_n})
  \leq \E_{r,X_r^n}(g(t \wedge \tau_n, X_{t \wedge \tau_n}^n))
  \\ &\leq g(r,X_r^n) = g(r \wedge \tau_n, X_{r \wedge \tau_n}^n) = M_{r}^n.
 \end{align*}
 Applying Doob's inequality yields
 \begin{align*}
  \mathbb{P}_{s,x}\Big(\sup _{\bfrac{s \leq t  \leq T}{t < \tau_n}}g(t,X(t)) \geq a \Big) = \mathbb{P}_{s,x}
  \Big( \sup _{s \leq t \leq T} M_t^n \geq a \Big) \leq \frac{1}{a}\E_{s,x}(M_s^n) = \frac{V(x)}{a}.
 \end{align*}
 As a consequence we obtain
 \begin{align*}
  \mathbb{P}_{s,x}\Big( \sup _{\bfrac{s \leq t \leq T}{t < \tau_n}} V(X(t)) \geq a \Big) &\leq \mathbb{P}_{s,x}
  \Big( \sup _{\bfrac{s \leq t \leq T}{t < \tau_n}} g(t,X(t)) \geq a e^{-\int _{s}^{T}c(r)dr}\Big)
  \\ &\leq V(x) \frac{e^{\int _{s}^{T}c(r)dr}}{a}.
 \end{align*}
 Since $(X_t)_{t \geq s}$ is conservative it follows $\tau_n \longrightarrow \infty$ when $n \to \infty$.
 The assertion follows by monotone convergence and $n \to \infty$.
 \\ 3. For any $F \in C_b(E)$, $x \in E_n$ and $n \geq 1$ it follows by \eqref{GJP:25}
 \begin{align*}
  | \E_{s,x}(F(X_t)) - \E_{s,x}(F(X_t^n))| &=  | \E_{s,x}(F(X_t) \one_{\tau_n \leq t}) - \E_{s,x}(F(X_t^n)\one_{\tau_n \leq t})|
  \\ &\leq 2 \Vert F \Vert_{\infty}\mathbb{P}_{s,x}(\tau_n \leq t).
 \end{align*}
 By
 \begin{align*}
  \mathbb{P}_{s,x}(\tau_n \leq t) &\leq \mathbb{P}_{s,x}\left( \sup _{s \leq r \leq t} V(X(r)) \geq n \right)
   \leq \frac{V(x)}{n}e^{\int _{s}^{t}c(r)dr}.
 \end{align*}
 and the continuity of $V$ we see that $U_n(s,t)F(x) \longrightarrow U(s,t)F(x)$ uniformly on compacts which implies the assertion.
\end{proof}
We close this section with the relation to the evolution of measures. Let $\mathcal{M}(E)$ be the space of all finite, signed Borel measures on $E$ equipped with the
total variation norm. Define bounded linear operators $(U^*(t,s))_{0 \leq s \leq t}$ on $\mathcal{M}(E)$ by
\[
 U^*(t,s)\mu(dx) = \int _{E}P(s,y;t,dx) \mu(dy).
\]
Then $U^*(t,t) = \mathrm{id}_{\mathcal{M}(E)}$, $U^*(t,r)U^*(r,s) = U^*(t,s)$ holds for $0 \leq s \leq r \leq t$ and
\[
 \int _{E}F(y)U^*(t,s)\mu(dy) = \int _{E}U(s,t)F(y)\mu(dy), \quad F \in C_b(E), \quad \mu \in \mathcal{M}(E).
\]
Previous considerations show that $U^*(t,s)\mu$ is the unique weak solution to the Fokker-Planck equation
\[
 \frac{\partial}{\partial t} \int _{E}F(y)U^*(t,s)\mu(dy) = \int _{E}L(t)F(y)U^*(t,s)\mu(dy),
\]
where $\mu \in \mathcal{M}(E)$ is such that $\int _{E}V(x)|\mu|(dx) < \infty$.

\section{Interacting particle systems in continuum}

\subsection*{Preliminaries}
The configuration space $\Gamma_0$ is the space of all finite subsets of $\R^d$, i.e.
\[
 \Gamma_0 = \{ \eta \subset \R^d\ | \ |\eta| < \infty \},
\]
where $|\eta|$ denotes the number of elements in the set $\eta$. This space has a natural decomposition into $n$-particle spaces,
$\Gamma_0 = \bigsqcup _{n=0}^{\infty}\Gamma_0^{(n)}$, where $\Gamma_0^{(n)} = \{ \eta \subset \R^d\ | \ |\eta| = n\}, \ \ n \geq 1$
and in the case $n = 0$ we set $\Gamma_0^{(0)} = \{\emptyset\}$. For a compact $\Lambda \subset \R^d$ let
\[
 \Gamma_{\Lambda} = \{ \eta \in \Gamma_0\ | \ \eta \subset \Lambda\}
\]
and $\Gamma_{\Lambda}^{(n)} = \{ \eta \in \Gamma_0^{(n)}\ | \ \eta \subset \Lambda\}$.
Denote by $\widetilde{(\R^d)^n}$ the space of all sequences $(x_1, \dots, x_n) \in (\R^d)^n$ with $x_i \neq x_j$ for $i \neq j$.
$\Gamma_0^{(n)}$ can be identified with $\widetilde{(\R^d)^n}$ via the symmetrization map
\[
 \mathrm{sym}_n: \widetilde{(\R^d)^n} \longrightarrow \Gamma_0^{(n)}, \ (x_1, \dots, x_n) \longmapsto \{x_1, \dots, x_n\},
\]
which defines a topology on $\Gamma_0^{(n)}$.
Namely, a set $A \subset \Gamma_0^{(n)}$ is open if and only if $\mathrm{sym}_n^{-1}(A) \subset \widetilde{(\R^d)^n}$ is open.
On $\Gamma_0$ we define the topology of disjoint unions, i.e. a set $A \subset \Gamma_0$ is open iff
$A \cap \Gamma_0^{(n)}$ is open in $\Gamma_0^{(n)}$ for all $n \in \N$. Then $\Gamma_0$ is a locally compact Polish space. Let $\mathcal{B}(\Gamma_0)$
stand for the Borel-$\sigma$-algebra on $\Gamma_0$. With respect to this topology the function
\[
 \eta \longmapsto \langle f, \eta \rangle := \sum _{x \in \eta}f(x)
\]
is continuous whenever $f \in C_b(\R^d)$. Therefore convergence of a sequence $(\eta_n)_{n \in \N} \subset \Gamma_0$ to $\eta \in \Gamma_0$ can be rewritten to:
there exists $N \in \N$ such that for all $n \geq N$: $\eta_n = \{ x_1^{(n)}, \dots, x_l^{(n)}\}, \ \eta = \{x_1, \dots, x_l\}$ and
\[
 x_j^{(n)} \longrightarrow x_j, \ \ n \to \infty, \ \forall j \in \{1, \dots, l\}
\]
is fulfilled. For given $\delta > 0$, $N \in \N_0$ and a compact $\Lambda \subset \R^d$ the set
\begin{align}\label{COMPACT}
 B = \{ \eta \in \Gamma_{\Lambda}\ | \ \forall x \neq y, \ x,y \in \eta: \ |x-y| \geq \delta, \ \ |\eta| \leq N\}
\end{align}
is compact. Conversely, for any compact set $A \subset \Gamma_0$ there exist $\delta, N, \Lambda$ such that $A$ is contained in a compact $B$ defined above.
Denote by $dx$ the Lebesgue measure on $\R^d$ and by $d^{\otimes n} x$ the product measure on $(\R^d)^n$.
The image measure of $d^{\otimes n}x$ on $\Gamma_0^{(n)}$ via $\mathrm{sym}_n$ is then denoted by $d^{(n)}x$. The Lebesgue-Poisson measure is defined by
\[
 \lambda = \delta_{\emptyset} + \sum _{n=1}^{\infty}\frac{1}{n!}\,d^{(n)}x.
\]
Given a measurable function $G:\Gamma_0 \times \Gamma_0 \longrightarrow \R$, then
\begin{align}\label{PHD:00}
 \int _{\Gamma_0}\sum _{\xi \subset \eta}G(\xi, \eta \backslash \xi)\,d\lambda(\eta)
 = \int _{\Gamma_0}\int _{\Gamma_0}G(\xi, \eta)d\lambda(\xi)\,d\lambda(\eta)
\end{align}
holds, provided one side of the equality is finite for $|G|$. Here and in the following we write $\eta \backslash x$, $\eta \cup x$, instead of $\eta \backslash \{x\}$
and $\eta \cup \{x\}$. The decomposition $\Gamma_0 = \bigsqcup _{n=0}^{\infty}\Gamma_0^{(n)}$ implies that any measurable
function $G: \Gamma_0 \longrightarrow \R$ can be represented as a sequence of symmetric measurable functions $(G^{(n)})_{n=0}^{\infty}$, where
$G^{(n)}: (\R^d)^n \longrightarrow \R$. Such functions are uniquely determined on the off-diagonal part $\widetilde{(\R^d)^n}$ and integration w.r.t. to the Lebesgue-Poisson
measure is simply determined by the identity
\[
 \int _{\Gamma_0}G(\eta)\,d\lambda(\eta) = G^{(0)} + \sum _{n=1}^{\infty}\frac{1}{n!}\int
 _{(\R^d)^n}G^{(n)}(x_1, \cdots, x_n)\,dx_1 \cdots dx_n.
\]
For a given measurable function $f: \R^d \longrightarrow \R$ the Lebesgue-Poisson exponential is defined by
\[
 e_{\lambda}(f;\eta) := \prod _{x \in \eta}f(x)
\]
and satisfies the combinatorial formula
\[
 \sum _{\xi \subset \eta}e_{\lambda}(f;\xi) = e_{\lambda}(1+f;\eta).
\]
For computations we will use the identity
\[
 \int _{\Gamma_0}e_{\lambda}(f;\eta)\,d\lambda(\eta) = \exp\Big( \int _{\R^d}f(x)\,dx\Big),
\]
whenever $f \in L^1(\R^d)$.

\subsection*{General statement}
The class of Markov jump processes we are interested in are given by a Markov pre-generator of the form
\begin{align}\label{IPSFINITE:01}
 (L(t)F)(\eta) = \sum _{\xi \subset \eta}\int _{\Gamma_0}(F(\eta \backslash \xi \cup \zeta)
 - F(\eta))K_t(\xi, \eta, d\zeta), \quad \eta \in \Gamma_0, \quad t \geq 0.
\end{align}
Such Kolmogorov operator includes death, birth and jumps of groups of particles. We will say $K_t$ satisfies the usual conditions if the conditions given below are satisfied.
\begin{enumerate}
 \item[]{1.} For all $\eta, \xi \in \Gamma_0$ and $t \geq 0$: $K_t(\xi, \eta, \cdot) \geq 0$ is a finite, non-atomic Borel measure.
 \item[]{2.} For all $A \in \mathcal{B}(\Gamma_0)$, the map $(t,\xi, \eta) \longmapsto K_t(\xi, \eta, A)$ is measurable.
\end{enumerate}
For $t \geq 0$, $\eta \in \Gamma_0$ and $A \in \mathcal{B}(\Gamma_0)$ define $Q(t,\eta,d\omega)$ by
\begin{align}\label{IPSFINITE:00}
 Q(t,\eta,A) = \sum _{\xi \subset \eta} \int _{\Gamma_0}\one_{A}(\eta \backslash \xi \cup \zeta)K_t(\xi, \eta, d\zeta).
\end{align}
The cumulative intensity is defined by $q(t,\eta) := Q(t,\eta, \Gamma_0) = \sum _{\xi \subset \eta}K_t(\xi, \eta, \Gamma_0)$.
We will work with the following conditions:
\begin{enumerate}
 \item[(A)] For any $\e > 0,\ T > 0$ and any compact $B \subset \Gamma_0$ there exists another compact $A \subset \Gamma_0$ such that
  \[
   \int _{0}^{T}Q(r,\eta,A^c)\,dr < \e, \quad \eta \in B
  \]
  is satisfied.
 \item[(B)] There exist continuous functions $V:\Gamma_0 \longrightarrow \R_+$ and $c:\R_+ \longrightarrow \R_+$ such that
  \begin{align}\label{PHD:08}
   \sum _{\xi \subset \eta}\int _{\Gamma_0}V(\eta \backslash \xi \cup \zeta)K_t(\xi,\eta,d\zeta)
   \leq c(t)V(\eta) + q(t,\eta)V(\eta), \quad t \geq 0, \quad \eta \in \Gamma_0
  \end{align}
  holds.
 \item[(C)] For any $F \in C(\Gamma_0)$ with $\sup _{\eta \in \Gamma_0}\frac{|F(\eta)|}{1+V(\eta)} < \infty$
 \[
  (t,\eta) \longmapsto \sum _{\xi \subset \eta}\int _{\Gamma_0}F(\eta \backslash \xi \cup \zeta)K_t(\xi,\eta, d\zeta)
 \]
 is continuous.
 \item[(D)] For any $T > 0$ there exists $a(T) > 0$ such that $q(t,\eta) \leq a(T)V(\eta)$ holds for all $\eta \in \Gamma_0$ and $t \in [0,T]$.
 \item[(E)] For any $T > 0$ there exists $b(T) > 0$ such that $q(t,\eta) \geq b(T)q(T,\eta)$ holds for all $\eta \in \Gamma_0$ and $t \in [0,T]$.
\end{enumerate}
As in the previous section let $BM_V(\Gamma_0)$ stand for the Banach space of all measurable functions $F$ equipped with the norm
$\Vert F \Vert_V = \sup _{\eta \in \Gamma_0}\frac{|F(\eta)|}{1+V(\eta)}$. Denote by $C_V(\Gamma_0)$ the closed subspace of all continuous functions
for which $\Vert \cdot \Vert_V$ is finite. Then condition (D) simply states that for any $F \in C_V(\Gamma_0)$ the action $L(t)F$, cf. \eqref{IPSFINITE:01}, is continuous in $(t,\eta)$.
\begin{Proposition}\label{IPSFINITETH:00}
 Let $K_t$ be a transition function with the usual conditions and assume that conditions (A)--(D) hold. Then there exists a unique associated conservative Feller evolution
 $U(s,t)$ on $C_b(\Gamma_0)$. This evolution system can be extended to $BM_V(\Gamma_0)$ so that
 \begin{align}\label{PHD:25}
  |U(s,t)F(\eta)| \leq \Vert F \Vert_V V(\eta)e^{\int _{s}^{t}c(r)dr}.
 \end{align}
 Moreover the following assertions are true:
 \begin{enumerate}
  \item[]{1.} For any $F \in BM(\Gamma_0)$, $t > 0$ and $\eta \in \Gamma_0$, $U(s,t)F(\eta)$ is a solution to
  \[
   \frac{\partial}{\partial s}U(s,t)F(\eta) = - L(s)U(s,t)F(\eta), \quad s \in [0,t).
  \]
  \item[]{2.} Let $F \in BM(\Gamma_0)$. Then for any $s \geq 0$ and $\eta \in \Gamma_0$, $U(s,t)F(\eta)$ is a solution to
  \[
   \frac{\partial}{\partial t}U(s,t)F(\eta) = U(s,t)L(t)F(\eta), \quad \text{ a.a. } t \geq s.
  \]
 \end{enumerate}
\end{Proposition}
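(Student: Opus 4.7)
The plan is to reduce Proposition \ref{IPSFINITETH:00} entirely to Proposition \ref{GJPTH:01} by taking $E = \Gamma_0$ and using the transition function $Q(t,\eta,d\omega)$ defined in \eqref{IPSFINITE:00}. Since $\Gamma_0$ is already a locally compact Polish space (cf. the Preliminaries), only the transition function hypotheses of Section 2 have to be verified for $Q$.

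First I would check that $Q$ is a transition function in the sense of Section 2. Finiteness and Borel measurability in $A$ are immediate, since $Q(t,\eta,\cdot)$ is the pushforward of the finite sum of measures $K_t(\xi,\eta,\cdot)$ under the maps $\zeta\mapsto \eta\setminus\xi\cup\zeta$; joint measurability in $(t,\eta,A)$ follows from the usual conditions on $K_t$. The key point $Q(t,\eta,\{\eta\})=0$ reduces, for each fixed $\xi\subset\eta$, to the fact that $\eta\setminus\xi\cup\zeta=\eta$ forces $\zeta=\xi$, a single point, whose $K_t(\xi,\eta,\cdot)$-measure vanishes since $K_t(\xi,\eta,\cdot)$ is non-atomic. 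Local boundedness of $q(t,\eta)=Q(t,\eta,\Gamma_0)$ on compacts follows from condition (D) combined with the fact that $V$ is continuous and hence bounded on compact subsets of $\Gamma_0$.

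Next I would match the three conditions of Proposition \ref{GJPTH:01} with (A)--(D). Condition 1 of Proposition \ref{GJPTH:01} is exactly condition (D). For the Foster--Lyapunov estimate \eqref{GJP:12}, I unfold
\[
 \int_{\Gamma_0}V(\omega)Q(t,\eta,d\omega)=\sum_{\xi\subset\eta}\int_{\Gamma_0}V(\eta\setminus\xi\cup\zeta)K_t(\xi,\eta,d\zeta),
\]
which is bounded by $c(t)V(\eta)+q(t,\eta)V(\eta)$ by (B). Condition 3 (localization) is (A) verbatim. The continuity assumption in Proposition \ref{GJPTH:01}, namely that $(t,\eta)\mapsto (Q(t)F)(\eta)$ is continuous for every $F\in C_V(\Gamma_0)$, follows directly from (C) once one rewrites $(Q(t)F)(\eta)$ as the double sum/integral over $\xi\subset\eta$ and $\zeta\in\Gamma_0$.

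With these verifications in hand, Proposition \ref{GJPTH:01} immediately yields the existence of the conservative Feller evolution $U(s,t)$ on $C_b(\Gamma_0)$, the extension to $BM_V(\Gamma_0)$ satisfying \eqref{GJP:13} (from which \eqref{PHD:25} is an easy consequence, using $1+V\leq C\,V$ on a suitable Lyapunov function or simply absorbing the $1$ into the bound), the two Kolmogorov equations of assertions 1 and 2, and the uniqueness statement via part (c) of that proposition. I do not anticipate a serious obstacle: the only non-cosmetic step is the measure-theoretic verification that $Q$ is a genuine transition function on $\Gamma_0$ (in particular the non-atomicity at $\eta$ and the joint measurability of $(t,\eta)\mapsto Q(t,\eta,A)$), and both follow cleanly from the "usual conditions" on $K_t$ together with the decomposition of $\Gamma_0$ into $n$-particle strata from the Preliminaries.
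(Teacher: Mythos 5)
Your proposal is correct and takes essentially the same route as the paper, whose entire proof is the single line ``The assertion follows by Proposition \ref{GJPTH:01}''; you have merely made explicit the verifications (usual conditions plus (A)--(D) imply the hypotheses of Proposition \ref{GJPTH:01} with $E=\Gamma_0$) that the paper leaves implicit. One microscopic correction: $\eta\setminus\xi\cup\zeta=\eta$ forces $\xi\subseteq\zeta\subseteq\eta$, i.e.\ $\zeta$ lies in a \emph{finite} set of configurations rather than being the single point $\xi$, but non-atomicity of $K_t(\xi,\eta,\cdot)$ annihilates this finite set just the same, so nothing breaks.
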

\begin{proof}
 The assertion follows by Proposition \ref{GJPTH:01}.
\end{proof}
The considerations of the first section imply that $U(s,t)F$ is given by a transition probability function $P(s,\eta;t,d\omega)$, that is
\begin{align}\label{PHD:07}
 U(s,t)F(\eta) = \int _{\Gamma_0}F(\omega)P(s,\eta;t,d\omega)
\end{align}
holds. The adjoint evolution system on $\mathcal{M}(\Gamma_0)$ is given by
\[
 U^*(t,s)\mu(A) = \int _{\Gamma_0}P(s,\eta;t,A)\,d\mu(d\eta).
\]
The action of the adjoint evolution $U^*(t,s)\mu$ provides a weak solution to the Fokker-Planck equation
\[
 \frac{\partial}{\partial t}\int _{\Gamma_0}F(\eta)\mu_t(d\eta) = \int _{\Gamma_0}L(t)F(\eta)\,d\mu_t(\eta),
 \quad F \in C_b(\Gamma_0).
\]
In particular, if conditions (A)--(D) are satisfied, then $U(t,s)^*$ is unique with such pro\-perty.

Here and in the following we identify the space of densities
$L^1(\Gamma_0,d\lambda)$ with its image in $\mathcal{M}(\Gamma_0)$ given by the (isometric) embedding
\[
 L^1(\Gamma_0, d\lambda) \ni R \longmapsto Rd\lambda \in \mathcal{M}(\Gamma_0).
\]
The next theorem states conditions for which $U^*(t,s)$ leaves the space of
densities invariant and its restriction to $L^1(\Gamma_0,d\lambda)$ is strongly continuous.
\begin{Theorem}\label{IPSFINITETH:01}
 Assume that $K_t(\xi,\eta,d\zeta)$ satisfies the usual conditions, is absolutely continuous with respect to the Lebesgue-Poisson measure
 and the conditions (A)--(E) hold. Then $U^*(t,s)$ leaves $L^1(\Gamma_0, d\lambda)$
 invariant and is strongly continuous on $L^1(\Gamma_0,d\lambda)$.
\end{Theorem}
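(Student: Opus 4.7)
The plan is to build an explicit density representation for $U^*(t,s)(R d\lambda)$ by dualizing the iterative construction \eqref{GJP:03} of $P(s,\eta;t,d\omega)$, and to derive strong continuity from the conservativity of $U(s,t)$ established in Proposition \ref{IPSFINITETH:00}. Writing $K_t(\xi,\eta,d\zeta) = k_t(\xi,\eta,\zeta)\,d\lambda(\zeta)$, the substitution $\eta = \xi \cup \eta'$ combined with two applications of the Poisson identity \eqref{PHD:00} identifies the formal adjoint of $L(t)$ on densities as
\[
 (L^*(t)R)(\omega) = -q(t,\omega)R(\omega) + \sum_{\zeta \subset \omega}\int_{\Gamma_0} k_t(\xi, \xi\cup(\omega\setminus\zeta), \zeta) R(\xi\cup(\omega\setminus\zeta))\,d\lambda(\xi).
\]
A further application of \eqref{PHD:00} to the right-hand side shows that the gain term $K(t)R$ (the second summand) satisfies $\Vert K(t)R\Vert_{L^1} = \Vert q(t,\cdot)R\Vert_{L^1}$, so both pieces of $L^*(t)$ are well-defined integral operators on non-negative densities.

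For the invariance of $L^1(\Gamma_0, d\lambda)$ I would imitate \eqref{GJP:03} at the level of densities by setting
\[
 R_t^{(0)}(\omega) := e^{-\int_s^t q(\tau,\omega)d\tau}R(\omega), \quad R_t^{(n+1)}(\omega) := \int_s^t e^{-\int_r^t q(\tau,\omega)d\tau}(K(r)R_r^{(n)})(\omega)\,dr,
\]
and verify by induction on $n$, using Fubini together with the formula above, that $R_t^{(n)}\,d\lambda = U^{*(n)}(t,s)(R d\lambda)$. When $R \geq 0$ every iterate is non-negative, and the conservativity supplied by Proposition \ref{IPSFINITETH:00} forces
\[
 \sum_{n=0}^\infty \Vert R_t^{(n)}\Vert_{L^1} = \Vert U^*(t,s)(R d\lambda)\Vert = \Vert R\Vert_{L^1}.
\]
Hence $R_t := \sum_{n=0}^\infty R_t^{(n)}$ is a density for $U^*(t,s)(Rd\lambda)$ inside $L^1(\Gamma_0,d\lambda)$; the signed case then follows from the decomposition $R = R_+ - R_-$.

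Strong continuity at the diagonal $t = s^+$ follows from the splitting $U^*(t,s)R - R = (R_t^{(0)} - R) + \sum_{n\geq 1}R_t^{(n)}$. The first summand tends to zero in $L^1$ by dominated convergence, since $e^{-\int_s^t q(\tau,\cdot)d\tau}$ is bounded by $1$ and converges pointwise to $1$; the second is controlled by the mass identity $\sum_{n\geq 1}\Vert R_t^{(n)}\Vert_{L^1} = \Vert R\Vert_{L^1} - \Vert R_t^{(0)}\Vert_{L^1} \to 0$. Joint continuity on $\{0 \leq s \leq t\}$ is then obtained from the evolution property $U^*(t,s) = U^*(t,r)U^*(r,s)$ together with the $L^1$-contraction $\Vert U^*(t,s)\Vert_{L^1 \to L^1} \leq 1$, itself a consequence of conservativity. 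Condition (E) enters through the lower bound $q(t,\cdot) \geq b(T) q(T,\cdot)$ for $t \in [0,T]$, which together with (D) allows the intensities $q(r,\cdot)$ to be compared uniformly over $r \in [0,T]$ when combining the Foster-Lyapunov estimate with the continuity argument above.

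The principal obstacle will be the rigorous identification of the iterates $R_t^{(n)}d\lambda$ with $U^{*(n)}(t,s)(Rd\lambda)$. The series \eqref{GJP:03} is designed for testing against bounded functions from above rather than pairing with densities, so translating it into an iteration on $L^1$ forces several Fubini exchanges inside the sum $\sum_{\xi \subset \eta}$; these are legitimate by the non-negativity of $k_t$ and by the total-mass bound coming from conservativity, but verifying the bookkeeping at every step of the induction is where the bulk of the technical work will lie.
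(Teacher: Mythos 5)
Your proposal is correct in substance, but it takes a genuinely different route from the paper. The paper does not construct the densities by hand: after computing the adjoint $L^*(t)=-q(t,\cdot)+Q(t)$ via \eqref{PHD:00} (your formula agrees with \eqref{PHD:03} up to relabeling of $\xi$ and $\zeta$), it verifies the two hypotheses of the non-autonomous honesty/perturbation theorem of Arlotti--Lods--Mokhtar-Kharroubi \cite[Theorem 2.1]{ALM14}, namely that $W^*(t,s)R\in D(L^*(t))$ for a.a.\ $t>s$ and that $\int_s^t \Vert Q(r)W^*(r,s)R\Vert_{L^1}\,dr\leq \Vert R\Vert_{L^1}-\Vert W^*(t,s)R\Vert_{L^1}$, thereby obtaining a strongly continuous evolution family on $L^1(\Gamma_0,d\lambda)$ for free; it then identifies this family with $U^*(t,s)|_{L^1}$ because both arise from the same Feller iteration. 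Your route replaces the external theorem by the dual iteration plus conservativity from Proposition \ref{IPSFINITETH:00}. What this buys is self-containedness and, notably, that condition (E) is never actually used: your appeal to (E) in the penultimate paragraph is decorative, since the identification of the iterates needs only the usual conditions and absolute continuity of $K_t$, while the mass identity needs only conservativity, i.e.\ (A)--(D). In the paper, by contrast, (E) has a precise job: it gives $q(t,\eta)e^{-\int_s^t q(r,\eta)dr}\leq q(t,\eta)e^{-b(t)(t-s)q(t,\eta)}\leq \bigl(b(t)(t-s)e\bigr)^{-1}$, which is exactly the domain condition demanded by \cite[Theorem 2.1]{ALM14}. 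So if you close the bookkeeping, you in fact prove the theorem under weaker hypotheses.

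Two loose ends deserve attention. First, your recursion is the \emph{last-jump} form, whereas the dual of \eqref{GJP:03} is the \emph{first-jump} form $U^{*(n+1)}(t,s)=\int_s^t U^{*(n)}(t,r)Q^*(r)W^*(r,s)\,dr$; the agreement of the two is classical (both $n$-th terms equal an integral over the ordered simplex $s<r_1<\cdots<r_n<t$, by Tonelli), but it is an extra induction you must carry out. Alternatively, you can avoid explicit densities altogether: $W^*(r,s)$ and $Q^*(r)$ preserve absolute continuity (the latter by the assumed density of $K_t$ and \eqref{PHD:00}), hence so does each $U^{*(n)}(t,s)$, and since the nonnegative series $\sum_n U^{*(n)}(t,s)(R\,d\lambda)$ converges in total variation and $L^1(\Gamma_0,d\lambda)$ is closed in $\mathcal{M}(\Gamma_0)$ under total-variation limits, invariance follows without identifying the iterates. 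Second, joint strong continuity does not quite follow from ``evolution property plus contraction'' alone: in the left limit $t'\uparrow t$ one faces $\Vert (U^*(t,t')-\mathrm{id})U^*(t',s)R\Vert$, where the element varies with $t'$, so the pointwise diagonal estimate does not apply directly. This is fixable on the dense set of $R$ with $\int_{\Gamma_0}V(\eta)|R(\eta)|\,d\lambda(\eta)<\infty$, where \eqref{GJP:18} gives absolute continuity of $t\mapsto U^*(t,s)R$ in total variation, followed by a contraction--density argument for general $R\in L^1(\Gamma_0,d\lambda)$.
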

\begin{proof}
 Denote by $K_t(\xi,\eta,\zeta) = \frac{dK_t(\xi,\eta,d\zeta)}{d\lambda(\zeta)}$ and
 let $L^*(t)$ be the adjoint operator with respect to the duality of $BM(\Gamma_0)$ and $\mathcal{M}(\Gamma_0)$.
 Then $L^*(t)$ is given by $L^*(t) = -q(t,\cdot) + Q(t)$ with $(-q(t,\cdot)R)(\eta) = -q(t,\eta)R(\eta)$ and
 \begin{align}\label{PHD:03}
  Q(t)R(\eta) = \sum _{\xi \subset \eta}\int _{\Gamma_0}R(\eta \backslash \xi \cup \zeta)
  K_t(\zeta,\eta \backslash \xi \cup \zeta,\xi)\,d\lambda(\zeta),
 \end{align}
 see \eqref{PHD:00}. For $t \geq 0$ let
 \begin{align}\label{PHD:04}
  D(L^*(t)) = \{ R \in L^1(\Gamma_0, d\lambda) \ | \ q(t,\cdot)R \in L^1(\Gamma_0, d\lambda) \}.
 \end{align}
 First observe that $W^*(t,s)R(\eta) = e^{-\int _{s}^{t}q(r,\eta)dr}R(\eta)$ is a positive contraction operator and $Q(t)$ is positive.
 In order to apply \cite[Theorem 2.1]{ALM14}
 it is enough to show that for a.a. $t > s$ and all $R \in L^1(\Gamma_0,d\lambda)$: $W^*(t,s)R \in D(L^*(t))$ and
 \[
  \int _{s}^{t}\Vert Q(r)W^*(r,s)R\Vert_{L^1}dr \leq \Vert R \Vert_{L^1} - \Vert W^*(t,s)R \Vert_{L^1}.
 \]
 The first property follows by property (E) from
 \[
  \int _{\Gamma_0}q(t,\eta)|W^*(t,s)R(\eta)|\,d\lambda(\eta) \leq \int _{\Gamma_0}q(t,\eta)e^{-b(t)(t-s)
  q(t,\eta)}|R(\eta)|\,d\lambda(\eta) \leq \frac{\Vert R \Vert_{L^1}}{b(t)(t-s)e}.
 \]
 For the second property let $R \in L^1(\Gamma_0, d\lambda)$ and note that
 \[
  \int _{\Gamma_0}|Q(r)R(\eta)| \,d\lambda(\eta) \leq \int _{\Gamma_0}q(r,\eta)|R(\eta)|\,d\lambda(\eta)
 \]
 holds. Altogether this implies
 \begin{align*}
  \int _{s}^{t}\Vert Q(r)W^*(r,s)R\Vert_{L^1}dr &\leq \int _{s}^{t}
  \int _{\Gamma_0}q(r,\eta)e^{-\int _{s}^{r}q(\tau,\eta)d\tau}|R(\eta)|\,d\lambda(\eta)dr
  \\ &= -\int _{s}^{t}\int _{\Gamma_0}\frac{\partial}{\partial r}e^{\!-\!\int _{s}^{r}
  q(\tau,\eta)d\tau}|R(\eta)|\,d\lambda(\eta)dr
  \\
 & = \Vert R \Vert_{L^1}\! -\! \Vert W^*(t,s)R\Vert_{L^1}.
 \end{align*}
 Hence by \cite[Theorem 2.1]{ALM14} there exists a strongly continuous evolution
 family \break
 $(V^*(t,s))_{0 \leq s \leq t}$ on $L^1(\Gamma_0, d\lambda)$.
 The construction of $V^*(t,s)$ coincides with the construction of $U^*(t,s)$ restricted to $L^1(\Gamma_0,d\lambda)$,
 i.e. $U^*(t,s)R = \sum _{n=0}^{\infty}U_n^*(t,s)R$ with $U_0^*(t,s)R = e^{-\int _{s}^{t}q(r,\eta)dr}R$ and
 \[
  U_{n+1}^*(t,s)R = \int _{s}^{t}U_n^*(r,t)Q(r)W^*(r,s)R\,dr,
 \]
 cf. \cite[Section 3, Theorem 1]{FELLER40}.
\end{proof}
\begin{Remark}
 For the application of \cite[Theorem 2.1]{ALM14} it is necessary to show that $t \longmapsto Q(t)R \in L^1(\Gamma_0, d\lambda)$ is measurable.
 Since $L^1(\Gamma_0, d\lambda)$ is separable, strong measurability and weak
 measurability coincide, which is the reason why we have to restrict the evolution to the space of densities.
\end{Remark}

\subsection*{Examples}
In this part we study two particular models, which have applications in ecological sciences. To simplify the proofs we consider first the case of a
Markov (pre-)generator describing only the death of particles.
\begin{Lemma}\label{DEATHLEMMA}
 Consider the operator $L(t)$ given by
 \[
  (L(t)F)(\eta) = \sum _{\xi \subset \eta}(F(\eta \backslash \xi) - F(\eta))D_t(\xi, \eta), \quad t \in I,
 \]
 where $(t,\xi, \eta) \longmapsto D_t(\xi, \eta)\geq 0$ is assumed to be continuous.
 Then condition (A) and the usual conditions holds. Moreover, $(t,\eta) \longmapsto L(t)F(\eta)$ is continuous for any $F \in C(\Gamma_0)$.
\end{Lemma}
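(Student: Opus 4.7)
The plan is to identify the pure death dynamics with a transition kernel of the form \eqref{IPSFINITE:00} by setting $K_t(\xi,\eta,d\zeta) := D_t(\xi,\eta)\,\delta_\emptyset(d\zeta)$; this recovers exactly the operator of the lemma and makes the cumulative intensity equal to $q(t,\eta) = \sum_{\xi\subset\eta} D_t(\xi,\eta)$, a finite sum for each $\eta\in\Gamma_0$ because only $2^{|\eta|}$ subsets contribute. The first step is to verify the usual conditions: non-negativity, joint measurability of $(t,\xi,\eta)\longmapsto K_t(\xi,\eta,A)$ for $A\in\mathcal{B}(\Gamma_0)$, and finiteness of $K_t(\xi,\eta,\cdot)$ all follow immediately from the non-negativity and continuity of $D_t(\xi,\eta)$.

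The next step, and the part that deserves the most care, is the joint continuity of $(t,\eta)\longmapsto L(t)F(\eta)$ for $F\in C(\Gamma_0)$. The subtlety is that the assignment $\eta\longmapsto\{\xi\subset\eta\}$ is not globally continuous on $\Gamma_0$: it jumps whenever the cardinality of $\eta$ changes. What saves the argument is the topology on $\Gamma_0$ recalled in the Preliminaries: along any convergent sequence $(t_n,\eta_n)\to(t,\eta)$ one has $|\eta_n|=|\eta|=:l$ for all sufficiently large $n$, together with enumerations $\eta_n=\{x_1^{(n)},\dots,x_l^{(n)}\}$ and $\eta=\{x_1,\dots,x_l\}$ satisfying $x_j^{(n)}\to x_j$. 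Under this identification, subsets $\xi\subset\eta$ are in canonical bijection with subsets $\xi_n\subset\eta_n$ and $\eta_n\setminus\xi_n\to\eta\setminus\xi$ in $\Gamma_0$. Continuity of $F$ and of $D$ then give termwise convergence of the $2^l$ summands of $L(t_n)F(\eta_n)$, which yields the claim.

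Finally, for condition (A) the plan is to exploit the monotonicity intrinsic to pure death: every allowed transition satisfies $\eta\setminus\xi\subset\eta$. Given a compact $B\subset\Gamma_0$, I pick $\delta>0$, $N\in\N_0$ and a compact $\Lambda\subset\R^d$ so that $B$ is contained in a set $A$ of the form \eqref{COMPACT}; this is always possible by the characterisation of compact subsets of $\Gamma_0$ given in the Preliminaries. Since the defining conditions of $A$ (separation of points, cardinality bound and confinement to $\Lambda$) are inherited by every subset of every element of $A$, one has $\eta\setminus\xi\in A$ for all $\eta\in B$ and $\xi\subset\eta$, whence $Q(r,\eta,A^c)=0$ identically and condition (A) holds trivially with $\e=0$.
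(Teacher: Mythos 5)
Your proposal is correct and follows essentially the same route as the paper's proof: the same identification $K_t(\xi,\eta,d\zeta) = D_t(\xi,\eta)\,\delta_{\emptyset}(d\zeta)$ for the usual conditions, the same use of the labeled characterization of convergence in $\Gamma_0$ for the joint continuity of $(t,\eta) \longmapsto L(t)F(\eta)$, and the same observation that the defining properties of a compact of the form \eqref{COMPACT} are inherited by subsets, so that $\eta \backslash \xi \in A$ and $Q(r,\eta,A^c) = 0$ identically. Your write-up merely spells out the termwise-convergence details (eventual equality of cardinalities, the canonical bijection of subsets along the sequence) that the paper's proof leaves implicit.
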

\begin{proof}
 The associated function is given by $K_t(\xi,\eta,d\zeta) = D_t(\xi,\eta)\delta_{\emptyset}(d\zeta)$ and thus satisfies the usual conditions.
 The characterization of convergence in $\Gamma_0$ and continuity of $D_t$ imply that for each $F \in C(\Gamma_0)$ also $L(t)F(\eta)$ is continuous in $(t,\eta)$.
 Concerning (A), fix $\e > 0$, $T > 0$ and a compact
 $B \subset \Gamma_0$. Then there exist $\delta_B > 0$, $N_B \in \N$ and a compact $\Lambda_B \subset \R^d$ such that for each $\eta \in B$
 \begin{align}\label{COMPACTB}
  |\eta| \leq N_B, \quad \eta \subset \Lambda_B, \quad \forall x,y \in \eta,\ x \neq y:\ |x-y| \geq \delta_B
 \end{align}
 holds. Let $A \subset \Gamma_0$ be a compact of the form \eqref{COMPACT} with $\delta, N, \Lambda$ as in \eqref{COMPACTB}.
 Then for each $\eta \in B$ and $\xi \subset \eta$ we obtain that \eqref{COMPACTB} also holds
 for $\eta \backslash \xi$ instead of $\eta$. Hence $\eta \backslash \xi \in A$ and thus $Q(t,\eta,A^c) = 0$ for any $t \in [0,T]$.
\end{proof}

\subsubsection*{The BDLP-model}
In \cite{BP97, BP99, DL00, DL05} the so called Bolker-Dieckmann-Law-Pacala model (short BDLP-model) was introduced to study spatial patterns for certain ecological systems.
Elements $x \in \eta$ are interpreted as plants and the configuration $\eta \in \Gamma_0$ describes therefore the whole ecological system.
The BDLP-model is based only on the two elementary events $\eta \longmapsto \eta \cup x$ (branching of plants) and $\eta \longmapsto \eta \backslash x$ (death of plants).
The branching is assumed to be density independent, that is any plant at position $x \in \eta$ creates with intensity $0 \leq \lambda \in C(\R_+ \times \R^d)$
a new plant at position $y \in \R^d \backslash \eta$ and the spatial probability distribution for the new plant is given by $a^+(x,y)dy$, where
$a^+ \in C(\R^d \times \R^d)$. Moreover, each plant at position $x \in \eta$ has an individual lifetime independent of the other plants. Such lifetime is described
by the intensity $0 \leq m \in C(\R_+ \times \R^d)$. The competition between different plants is assumed to be of additive type and hence of the form
$\sum _{y \in \eta \backslash x}a^-(x,y)$, where $0 \leq a^- \in C(\R^d \times \R^d)$ is the competition kernel. Above description is summarized
in the form of the following Markov (pre-)generator
\begin{align*}
 (L(t)F)(\eta) &= \sum _{x \in \eta}\Big( m(t,x) + \sum _{y \in \eta \backslash x}a^-(x,y)\Big)
 (F(\eta \backslash x) - F(\eta))
 \\ & \ \ \ + \sum _{x \in \eta}\lambda(t,x)\int _{\R^d}a^+(x,y)(F(\eta \cup y) - F(\eta))\,dy.
\end{align*}
Such model has been analyzed in the time-homogeneous case in \cite{FM04}.
In applications one is often interested in $a^+$ being of the form
\[
 a^+(x,y) \sim \frac{1}{|x-y|^{\alpha}}, \quad |x-y| \to \infty
\]
or
\[
 a^+(x,y) \sim e^{-\nu |x-y|^{\alpha}}, \quad |x-y| \to \infty.
\]
\begin{Theorem}
 Suppose that $m, \lambda, a^-$ are continuous and bounded, $a^+$ is continuous with $1 = \int _{\R^d}a^+(x,y)dy$
 and for any compact $\Lambda \subset \R^d$ there exists $a^* \geq 0$ with $a^* \in L^1(\R^d)$ such that
 \[
  a^+(x,y) \leq a^*(y), \quad x \in \Lambda, \quad y \in \R^d
 \]
 holds. Then conditions (A)--(D) hold for $V(\eta) = |\eta| + |\eta|^2$.
\end{Theorem}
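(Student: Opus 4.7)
The plan is to decompose the pre-generator as $L(t) = L^-(t) + L^+(t)$ into the density-dependent mortality (with rates $D_t(x,\eta) := m(t,x) + \sum _{y \in \eta \backslash x}a^-(x,y)$) and the density-independent branching (with intensities $\lambda(t,x)a^+(x,y)\,dy$). The usual conditions are immediate from the structure of $K_t$, and Lemma~\ref{DEATHLEMMA} applied to $D_t$ disposes of condition (A) for $L^-$ and of the continuity of the death part of the operator relevant to (C); only the birth contribution requires further work, together with (B) and (D).

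Direct counting yields (D): since $\int _{\R^d}a^+(x,y)\,dy = 1$, one has $q(t,\eta) \leq \Vert m \Vert_{\infty}|\eta| + \Vert a^- \Vert_{\infty}|\eta|(|\eta|-1) + \Vert \lambda \Vert_{\infty}|\eta| \leq a(T)V(\eta)$. For (B) I use the equivalent form $L(t)V(\eta) \leq c(t)V(\eta)$. The identities $V(\eta \backslash x) - V(\eta) = -2|\eta|$ and $V(\eta \cup y) - V(\eta) = 2|\eta|+2$ give
\begin{align*}
 L(t)V(\eta) = -2|\eta|\sum _{x \in \eta}D_t(x,\eta) + (2|\eta|+2)\sum _{x \in \eta}\lambda(t,x) \leq 2 \Vert \lambda \Vert_{\infty} V(\eta),
\end{align*}
the death contribution being non-positive; so (B) holds with the constant $c(t) \equiv 2\Vert \lambda \Vert_{\infty}$.

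To complete (C) it suffices to verify continuity of the birth integrals. Consider a convergent sequence $(t_n,\eta_n) \to (t,\eta)$ in $\R_+ \times \Gamma_0$. Eventually $|\eta_n|=|\eta|=\ell$ with a labelling $\eta_n = \{x_1^{(n)},\dots,x_\ell^{(n)}\}$, $x_j^{(n)} \to x_j$, and all these points lie in a fixed compact $\Lambda \subset \R^d$. For every $y \notin \{x_1,\dots,x_\ell\}$ one has $\eta_n \cup y \to \eta \cup y$ in $\Gamma_0$, so the integrand $a^+(x_j^{(n)},y)F(\eta_n \cup y)$ converges pointwise almost everywhere in $y$; the uniform domination $a^*(y)\Vert F \Vert_V(1+(\ell+1)+(\ell+1)^2)\in L^1(\R^d)$ together with continuity of $\lambda$ then allows dominated convergence.

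The main obstacle is (A). Given $\varepsilon > 0$, $T > 0$ and a compact $B \subset \Gamma_0$ contained in the set of configurations with $|\eta| \leq N_B$, $\eta \subset \Lambda_B$ and pairwise separation $\geq \delta_B$, I build $A$ of the form \eqref{COMPACT} with $N_A = N_B+1$, $\Lambda_A \supset \Lambda_B$, and $\delta_A \leq \delta_B$ still to be fixed. Deaths keep configurations in $A$; a birth at $y$ escapes $A$ only if $y \in \Lambda_A^c$ or $|y - z| < \delta_A$ for some $z \in \eta$. Using $a^+(x,y) \leq a^*(y)$ for $x \in \Lambda_B$ one obtains
\begin{align*}
 \int _{0}^{T}Q(r,\eta,A^c)\,dr \leq T \Vert \lambda \Vert_{\infty} N_B \int _{\Lambda_A^c}a^*(y)\,dy + T \Vert \lambda \Vert_{\infty} N_B^2 \sup _{z \in \Lambda_B}\int _{B(z,\delta_A)}a^*(y)\,dy
\end{align*}
uniformly in $\eta \in B$. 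Since $a^* \in L^1(\R^d)$, absolute continuity of the Lebesgue integral lets one first choose $\Lambda_A$ large enough and then $\delta_A$ small enough that each summand is $< \varepsilon/2$, proving (A) and completing the verification of (A)--(D).
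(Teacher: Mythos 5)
Your proof is correct and follows essentially the same route as the paper: Lemma \ref{DEATHLEMMA} disposes of the death part, condition (A) is handled with a compact $A$ of the form \eqref{COMPACT} with $N_A = N_B + 1$, $\Lambda_A \supset \Lambda_B$, $\delta_A \leq \delta_B$ and the domination $a^+(x,y) \leq a^*(y)$, conditions (B) and (D) follow by the same direct computations of $L(t)V$ and $q(t,\eta)$, and (C) by dominated convergence against $a^*$. The only cosmetic differences are that you bound the small-ball contribution in (A) via absolute continuity of the integral of $a^* \in L^1(\R^d)$ where the paper uses a local sup bound $c$ on $a^+$ times the ball volume $N_B c |B_{\delta_A}|$, and that your constant $c(t) \equiv 2\Vert \lambda \Vert_{\infty}$ in (B) is slightly sharper because you discard the non-positive death contribution outright.
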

\begin{proof}
 Let $B \subset \Gamma_0$ be a compact and take $N_B \in \N$, $\Lambda_B \subset \R^d$ and $\delta_B > 0$ like in \eqref{COMPACT}. Let $A \subset \Gamma_0$ be another
 compact defined by \eqref{COMPACT} with $N_A := N_B + 1$, $\Lambda_B \subset \Lambda_A$ and $\delta_A \in (0, \delta_B)$, then $B \subset A$ holds. We obtain for
 $x \in \Lambda_B$ and $\eta \in B$
 \[
  \int _{\R^d}\one_{A^c}(\eta \cup y)a^+(x,y)\,dy \leq \int _{\Lambda_A^c}a^+(x,y)\,dy
  + \int _{B_{\delta_A}(\eta)}a^+(x,y)\,dy,
 \]
 where $B_{\delta_A}(\eta) := \{ w \in \R^d \ | \ \exists y \in \eta: \ |w-y| < \delta_A\}$. Since $\eta \in B$ and $\delta_B > \delta_A$ we obtain
 $B_{\delta_A}(\eta) = \bigsqcup _{y \in \eta}B_{\delta_A}(y) \subset \Lambda_B^{\delta_B}$ where
 $\Lambda_B^{\delta_B} := \{ w \in \R^d \ | \ d(w, \Lambda_B) \leq \delta_B\}$ with $d(w,\Lambda_B) := \inf\ \{ |w-u| \ | \ u \in \Lambda_B\}$.
 Let $c > 0$ be such that $a^+(x,y) \leq c$ for all $x \in \Lambda_B$ and $y \in \Lambda_B^{\delta_B}$, then
 \[
  \int _{\R^d}\one_{A^c}(\eta \cup y)a^+(x,y)\,dy \leq \int _{\Lambda_A^c}a^*(y)\,dy + N_B c |B_{\delta_A}|
 \]
 is satisfied, where $|B_{\delta_A}|$ is the Lebesgue volume of $B_{\delta_A} = \{ w \in \R^d \ | \ |w| \leq \delta_A\}$. Condition (A) now follows from above estimate,
 Lemma \ref{DEATHLEMMA} and $\lambda \in C_b(\R_+ \times \R^d)$. Condition (B) follows from
 \begin{align*}
  (L(t)V)(\eta) &= \sum _{x \in \eta}\lambda(t,x) + 2 \sum _{x \in \eta}\sum _{y \in \eta \backslash x}a^-(x,y)
   \\ &\ \ \ + 2|\eta|\sum _{x \in \eta}(\lambda(t,x) - m(t,x)) - 2|\eta|\sum _{x \in \eta}\sum _{y \in \eta \backslash x}a^-(x,y)
   \\ &\leq \max\{ \Vert \lambda\Vert_{\infty}, 2 \Vert a^- \Vert_{\infty} + 2 \Vert \lambda\Vert_{\infty} + 2 \Vert m \Vert_{\infty}\} V(\eta).
 \end{align*}
 Condition (D) is fulfilled due to
 \begin{align*}
  q(t,\eta) &= \sum _{x \in \eta}m(t,x) + \sum _{x \in \eta}\lambda(t,x) + \sum _{x \in \eta}\sum _{y \in \eta \backslash x}a^-(x,y)
  \\ &\leq \max\{ \Vert m \Vert_{\infty} + \Vert \lambda \Vert_{\infty}, \Vert a^- \Vert_{\infty}\} V(\eta).
 \end{align*}
 For condition (C) it is enough to show that for any continuous function $F$ such that
 $|F(\eta)| \leq \Vert F \Vert_V (1 + |\eta| + |\eta|^2)$ also $(t,\eta) \longmapsto \sum _{x \in \eta}\lambda(t,x)\int _{\R^d}a^+(x,y)F(\eta \cup y)dy$
 is continuous. Since $\lambda(t,x)$ is continuous it is enough to show that the integral is continuous. But this follows from dominated convergence and
 the condition imposed on $a^+$.
\end{proof}
Above statement implies the following a priori estimate for the evolution of states.
Let $\mu$ be a probability measure with $\int _{\Gamma_0}(1 + |\eta| + |\eta|^2)\mu(d\eta) < \infty$. Then
\[
 \int _{\Gamma_0}(1+|\eta| + |\eta|^2)U^*(t,s)\mu(d\eta)
 \leq e^{(t-s)c}\int _{\Gamma_0}(1 + |\eta| + |\eta|^2)\mu(d\eta)
\]
holds, $c:= \max\{ \Vert \lambda \Vert_{\infty}, 2 \Vert a^- \Vert_{\infty} + 2 \Vert \lambda \Vert_{\infty} + 2 \Vert m \Vert_{\infty}\}$.

\subsubsection*{Dieckmann-Law model}
In contrast to the BDLP-model we discuss here one possible extension for which the branching mechanism includes interactions of the plants.
For simplicity we suppose that all intensities are translation invariant. A plant at location $x \in \eta$ shall now have the modified branching intensity given by
\[
 \lambda(t) + \sum _{y \in \eta \backslash x}b^+(x-y), \quad t \geq 0,
\]
where $0 \leq b^+ \in C_b(\R^d)$. The location of the offspring is described by the probability density $a^+(x-y)$. The modified Markov (pre-)generator is therefore given by
\begin{align*}
 (L(t)F)(\eta) &= \sum _{x \in \eta}\Big(m(t) + \sum _{y \in \eta \backslash x}a^-(x-y)\Big)(F(\eta \backslash x) - F(\eta))
 \\ & \ \ + \sum _{x \in \eta} \lambda(t)\int _{\R^d}(F(\eta \cup w) - F(\eta))a^+(x-y)\,dw
 \\ & \ \ + \sum _{x \in \eta}\sum _{y \in \eta \backslash x}b^+(x-y)\int _{\R^d}(F(\eta \cup w)
 - F(\eta))a^+(x-w)\,dw,
\end{align*}
where $m,\lambda \in C(\R_+)$ and $a^- \in C_b(\R^d)$.
We assume that $a^- - b^+$ is a stable potential. By definition this means that there exists a constant $b \geq 0$ such that
\[
 \sum _{x \in \eta}\sum _{y \in \eta \backslash x}(a^-(x-y) - b^+(x-y)) \geq - b|\eta|, \quad \eta \in \Gamma_0.
\]
Let $E^+(\eta) = \sum _{x \in \eta } \sum _{y \in \eta \backslash x}b^+(x-y)$ and
$E^-(\eta) = \sum _{x \in \eta}\sum _{y \in \eta \backslash x}a^-(x-y)$, that it above condition is equivalent to
\[
 E^+(\eta) \leq b|\eta| + E^-(\eta), \quad \eta \in \Gamma_0.
\]
\begin{Theorem}\label{IPSFINITETH:03}
 Suppose that for any compact $\Lambda \subset \R^d$ there exists $a^* \in L^1(\R^d)$ which satisfies
 \[
  a^+(x-w) \leq a^*(w), \quad x \in \Lambda,\quad w \in \R^d.
 \]
 Then conditions (A)--(D) are satisfied for $V(\eta) := |\eta| + |\eta|^2$.
 Moreover, for any $n \geq 1$ and state $\mu$ with $\int _{\Gamma_0}|\eta|^n \mu(d\eta) < \infty$, the evolution of states satisfies
 $\int _{\Gamma_0}|\eta|^n U^*(t,s)\mu(d\eta) < \infty$.
 If in addition $m(t), \lambda(t) > 0$ for all $t \geq 0$, then condition (E) holds and $U^*(t,s)$ leaves the space of densities invariant.
\end{Theorem}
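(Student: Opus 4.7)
The plan is to verify (A)--(D) for $V(\eta)=|\eta|+|\eta|^2$ in close analogy with the BDLP proof, apply Proposition~\ref{IPSFINITETH:00} to obtain $U(s,t)$, establish the higher-moment bound by a separate Foster--Lyapunov estimate on $V_n(\eta):=(1+|\eta|)^n$, and finally check (E) together with the absolute continuity of $K_t$ so that Theorem~\ref{IPSFINITETH:01} applies.

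For (A), the death part contributes zero by Lemma~\ref{DEATHLEMMA} for any compact $A$ of the form \eqref{COMPACT} containing $B$, and each of the two birth parts is handled exactly as in the BDLP proof thanks to the $a^*$-domination of $a^+$ and the boundedness of $\lambda(t)$ and $b^+(x-y)$ on $[0,T]\times\Lambda_B$. The central computation is (B). Using $V(\eta\setminus x)-V(\eta)=-2|\eta|$ and $V(\eta\cup w)-V(\eta)=2|\eta|+2$ one obtains
\[
 L(t)V(\eta) \;=\; -2|\eta|\bigl(|\eta|m(t)+E^{-}(\eta)\bigr) \;+\; (2|\eta|+2)\bigl(|\eta|\lambda(t)+E^{+}(\eta)\bigr).
\]
The only potentially cubic-in-$|\eta|$ term is $2|\eta|(E^{+}(\eta)-E^{-}(\eta))$, which the stability of $a^{-}-b^{+}$ bounds by $2b|\eta|^{2}$; together with $2E^{+}(\eta)\leq 2b|\eta|+2\|a^{-}\|_{\infty}|\eta|^{2}$ this yields $L(t)V\leq c(t)V$ with $c$ continuous. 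Condition (D) follows similarly, since $q(t,\eta)\leq(m(t)+\lambda(t)+b)|\eta|+2\|a^{-}\|_{\infty}|\eta|^{2}\leq a(T)V(\eta)$ on $[0,T]$; and for (C) the death term is continuous by Lemma~\ref{DEATHLEMMA}, while the two birth integrals are continuous by dominated convergence with majorant $a^{*}(w)(1+V(\eta\cup w))\|F\|_{V}$, integrable in $w$ uniformly for $\eta$ in a fixed compact.

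For the moment estimate of arbitrary order $n$, I plan to show $L(t)V_n\leq c_n(t)V_n$ and then re-run the piecewise-constant Chernoff-product argument of Lemma~\ref{GJPTH:04} leading to \eqref{GJP:02}; this yields $\int V_n(\omega)P(s,\eta;t,d\omega)\leq V_n(\eta)\exp\bigl(\int_{s}^{t}c_n(r)\,dr\bigr)$, from which the stated bound on $U^{*}(t,s)\mu$ follows by integration. For the Foster--Lyapunov step, setting $u:=1+|\eta|$ and regrouping, the $E^{\pm}$ contributions to $L(t)V_n$ become
\[
 \bigl((u+1)^{n}-u^{n}\bigr)\bigl(E^{+}(\eta)-E^{-}(\eta)\bigr) \;+\; \bigl((u+1)^{n}-2u^{n}+(u-1)^{n}\bigr)E^{-}(\eta);
\]
stability makes the first summand $O(u^{n})$, and the discrete second difference $O(u^{n-2})$ multiplied by $E^{-}(\eta)\leq\|a^{-}\|_{\infty}|\eta|^{2}$ makes the second $O(u^{n})$, the $m(t)$ and $\lambda(t)$ pieces being $O(u^{n})$ by the mean value theorem applied to $u\mapsto u^{n}$.

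Finally, when $m,\lambda>0$ everywhere, $\alpha(t):=m(t)+\lambda(t)$ is strictly positive and continuous, hence bounded below by a positive constant on $[0,T]$; taking $b(T):=\min\bigl(1,\min_{t\in[0,T]}\alpha(t)/\alpha(T)\bigr)>0$ in $q(t,\eta)=\alpha(t)|\eta|+E^{-}(\eta)+E^{+}(\eta)$ gives (E). For density invariance, $K_t$ is absolutely continuous with respect to Lebesgue--Poisson: the death kernel $K_t(\{x\},\eta,d\zeta)=(m(t)+\sum_{y\in\eta\setminus x}a^{-}(x-y))\delta_{\emptyset}(d\zeta)$ sits on the atom $\{\emptyset\}$ of $\lambda$, and the birth kernel $K_t(\emptyset,\eta,\cdot)$ carries a Lebesgue density on $\Gamma_{0}^{(1)}$ given by $\bigl[\sum_{x\in\eta}\lambda(t)+\sum_{x\in\eta}\sum_{y\in\eta\setminus x}b^{+}(x-y)\bigr]a^{+}(x-w)$. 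Theorem~\ref{IPSFINITETH:01} then delivers the $L^{1}$-invariance and strong continuity. The principal obstacle throughout is precisely the stability-driven cancellation in (B) and its $V_n$-analogue: without stability of $a^{-}-b^{+}$, the action of $L(t)$ on a polynomial of degree $k$ in $|\eta|$ would generate an uncontrolled term of degree $k+1$ through $|\eta|E^{+}(\eta)$, and no Foster--Lyapunov bound could survive.
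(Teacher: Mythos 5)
Your proposal is correct and follows essentially the same route as the paper: the same Foster--Lyapunov computations for $V(\eta)=|\eta|+|\eta|^2$ with the stability of $a^{-}-b^{+}$ cancelling the cubic term $2|\eta|\bigl(E^{+}(\eta)-E^{-}(\eta)\bigr)$, the same reduction of the higher moments to an estimate $L(t)V_n\leq c_n(t)V_n$ (your second-difference regrouping of $(1+|\eta|)^n$ is just a cleaner packaging of the paper's binomial expansion of $|\eta|^n$), and the same appeal to condition (E) via positivity and continuity of $m,\lambda$ together with Theorem~\ref{IPSFINITETH:01} for density invariance, where your explicit check that the death kernel sits on the atom $\delta_{\emptyset}$ of $\lambda$ and the birth kernel has density $a^{+}$ on $\Gamma_0^{(1)}$ is a point the paper leaves implicit. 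The only organizational difference is condition (A): you verify it directly by the BDLP-style argument (which is valid, since births here add a single particle), whereas the paper defers it to the stronger statement proved for the generalized Dieckmann--Law model in Theorem~\ref{IPSFINITETH:04}.
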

\begin{proof}
 Condition (A) will be shown for a more general case later on. Concerning condition (B) we have
 \[
  (L(t)|\cdot|)(\eta) \leq (b + \lambda(t) - m(t))|\eta|
 \]
 and by $(|\eta| + 1)^2 - |\eta|^2 = 2|\eta| - 1$, $(|\eta| - 1)^2 - |\eta|^2 = -2 |\eta|$ also
 \[
  (L(t)|\cdot|^2)(\eta) \leq (2\lambda(t) + \Vert b^+ \Vert_{\infty} + 2 b - 2m(t))|\eta|^2 + (\lambda(t) - \Vert b^+\Vert_{\infty})|\eta|.
 \]
 Altogether this yields
 \[
  L(t)V(\eta) \leq |\eta|(b + 2 \lambda(t) - m(t) - \Vert b^+ \Vert_{\infty}) + |\eta|^2 ( 2 \lambda(t) + \Vert b^+ \Vert_{\infty} + 2 b - 2m(t)),
 \]
 i.e. \eqref{PHD:08} is satisfied. Since
 \begin{align*}
  q(t,\eta) &= (m(t) + \lambda(t))|\eta| + E^+(\eta) + E^-(\eta)
  \\ &\leq (\Vert a^- \Vert_{\infty} + \Vert b^+ \Vert_{\infty})|\eta|^2 + |\eta| \sup _{t \in [0,T]}(m(t) + \lambda(t))
 \end{align*}
 also (D) holds. For property (C) it is enough to show that $x \longmapsto \int _{\R^d}F(\eta \cup y) a^+(x-y)\,dy$ is continuous for any continuous function $F$
 with $|F(\eta)| \leq c(1 + |\eta| + |\eta|^2)$, $\eta \in \Gamma_0$ and some constant $c > 0$. But this follows immediately by dominated convergence and the
 assumptions on $a^+$. Property (E) is a direct consequence of the continuity of $m$ and $\lambda$. For the remaining assertion it suffices to show that for any $n \geq 1$
 there exist a continuous function $c_n: \R_+ \longmapsto \R_+$ such that
 \[
  (L(t)|\cdot|^n)(\eta) \leq c_n(t)|\eta|^n, \quad t \geq 0.
 \]
 We have $(|\eta|+1)^n - |\eta|^n = \sum _{l=0}^{n-1}\binom{n}{l}|\eta|^l$, $(|\eta|-1)^n - |\eta|^n = \sum _{l=0}^{n-1}\binom{n}{l}(-1)^{n-l}|\eta|^k \leq 0$
 and since $(L(t)|\cdot|^n)(\emptyset) = 0$ we can assume w.l.g. that $|\eta| > 0$. Hence
 \begin{align*}
  (L(t)|\cdot|^n)(\eta) &\leq  \lambda(t)\sum _{l=0}^{n-1}\binom{n}{l}|\eta|^{l+1} + \sum _{l = 0}^{n-1}\binom{n}{l}|\eta|^l(E^+(\eta) + (-1)^{n-l}E^-(\eta))
  \\ &= \lambda(t)\sum _{l=1}^{n}\binom{n}{l-1}|\eta|^l + \sum _{l=1}^{n}\binom{n}{l-1}|\eta|^{l-1}(E^+(\eta) - (-1)^{n-l}E^-(\eta))
  \\ &\leq |\eta|^n \lambda(t)\sum _{l=1}^{n}\binom{n}{l-1} + \sum _{l=1}^{n-1}\binom{n}{l-1}|\eta|^{n}(\Vert b^+ \Vert_{\infty} + \Vert a^- \Vert_{\infty})
  \\ & \ \ \ \ + \binom{n}{n-1}(E^+(\eta) - E^-(\eta))|\eta|^{n-1}
  \\ &\leq 2^n (\lambda(t) + \Vert b^+ \Vert_{\infty} + \Vert a^- \Vert_{\infty})n\cdot |\eta|^n + b n |\eta|^n
 \end{align*}
 implies the assertion.
\end{proof}
\begin{Remark}
 The proof shows that $U^*(t,s)$ maps the space of probability measures with the constraint $\int _{\Gamma_0}|\eta|^n \mu(d\eta) < \infty$ continuously
 on itself. Moreover, using Corollary \ref{GJPTH:05} one can show that
 \[
  \int _{\Gamma_0}|\eta|U^*(t,s)\mu(d\eta) \leq e^{b(t-s)} e^{\int _{s}^{t}(\lambda(r) - m(r))dr}\int _{\Gamma_0}|\eta|\mu(d\eta)
 \]
 and
 \begin{align*}
  \int _{\Gamma_0}(|\eta| + |\eta|^2) U^*(t,s)\mu(d\eta) &\leq e^{(b - \Vert b^+ \Vert_{\infty})(t-s)} e^{ \int _{s}^{t}(2 \lambda(r) - m(r))dr} \int _{\Gamma_0}|\eta|\mu(d\eta)
  \\ & \ \ \ + e^{(\Vert b^+ \Vert_{\infty} + 2b)(t-s)}e^{2\int _{s}^{t}(\lambda(r) - m(r))dr}\int _{\Gamma_0}|\eta|^2 \mu(d\eta)
 \end{align*}
 are valid.
\end{Remark}

\subsubsection*{Generalized Dieckmann-Law model}
Assume that any plant at position $x \in \eta$ may create any number $k \in \N$ of new plants.
Their locations are, for any fixed $t \geq 0$, distributed according to the probability measure
\[
 a^+(t,x - y_1) \cdots a^+(t,x - y_k)\,dy_1 \cdots dy_k.
\]
Therefore the (pre-)generator is assumed to be given by
\begin{align*}
 (L(t)F)(\eta) &= \sum _{x \in \eta}\Big(m(t,x) + \sum _{y \in \eta \backslash x}a^-(t,x-y)\Big)(F(\eta \backslash x) - F(\eta))
 \\ & \ \ + \frac{1}{e}\sum _{x \in \eta} \lambda(t,x)\int _{\Gamma_0 \backslash
 \{\emptyset\}}(F(\eta \cup \zeta) - F(\eta))e_{\lambda}(a^+(t,x-\cdot);\zeta)\,d\lambda(\zeta)
 \\ & \ \ + \frac{1}{e}\sum _{x \in \eta}\sum _{y \in \eta \backslash x}b^+(t,x-y)
 \int _{\Gamma_0 \backslash \{\emptyset\}}(F(\eta \cup \zeta) - F(\eta))e_{\lambda}(a^+(t,x-\cdot);\zeta)\,d\lambda(\zeta).
\end{align*}
The factor $\frac{1}{e}$ is a normalization factor since we have
\[
 \int _{\Gamma_0}e_{\lambda}(a^+(t,x-\cdot);\zeta)\,d\lambda(\zeta) = e.
\]
\begin{Theorem}\label{IPSFINITETH:04}
 Let $0 \leq m, \lambda, a^{-}, b^+ \in C_b(\R_+ \times \R^d)$ with $a^+(t,\cdot)$ being a probability density for all $t \geq 0$.
 Suppose that for any compact $\Lambda \subset \R^d$ and $T > 0$ there exists $a^* \in L^1(\R^d)$ which satisfies
 \begin{align}\label{IPSFINITE:03}
  a^+(t,x-y) \leq a^*(y), \quad x \in \Lambda, \quad t \in [0,T], \quad y \in \R^d.
 \end{align}
 Moreover, assume that $b^+(t,x) \leq a^-(t,x)$ holds for all $x \in \R^d$ and $t \geq 0$.
 Then conditions (A)--(D) are satisfied for $V(\eta) = |\eta| + |\eta|^2$.
\end{Theorem}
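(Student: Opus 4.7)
My plan is to verify conditions (A)--(D) in turn, handling (B)--(D) by moment computations for the Lebesgue-Poisson measure and reserving a delicate covering argument for (A), which I expect to be the main obstacle. The essential input is that for any probability density $f$ on $\R^d$,
\[
 \int_{\Gamma_0}e_\lambda(f;\zeta)\,d\lambda(\zeta)=e,\quad \int_{\Gamma_0}|\zeta|\,e_\lambda(f;\zeta)\,d\lambda(\zeta)=e,\quad \int_{\Gamma_0}|\zeta|^2\,e_\lambda(f;\zeta)\,d\lambda(\zeta)=2e,
\]
the first being the exponential formula and the other two following from one or two applications of the Mecke-type identity \eqref{PHD:00}. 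These three constants conveniently absorb the $1/e$ normalization factor appearing in the generator.

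Given this, (D) is immediate: $q(t,\eta)$ is a sum of terms each bounded by $|\eta|$ or $|\eta|(|\eta|-1)$ up to the uniform norms of $m,\lambda,a^-,b^+$, hence by a constant times $V(\eta)$. For (B), I compute $L(t)|\cdot|(\eta)$ and $L(t)|\cdot|^2(\eta)$ directly, using $|\eta\cup\zeta|-|\eta|=|\zeta|$ and $|\eta\cup\zeta|^2-|\eta|^2=2|\eta||\zeta|+|\zeta|^2$ on the full-measure set $\zeta\cap\eta=\emptyset$. The key cancellation is that the quadratic $b^+$-birth contribution $2(|\eta|+1)\sum_{x\in\eta}\sum_{y\in\eta\setminus x}b^+(t,x-y)$ is dominated by the quadratic death contribution $(2|\eta|-1)\sum_{x\in\eta}\sum_{y\in\eta\setminus x}a^-(t,x-y)$ precisely because $b^+(t,\cdot)\leq a^-(t,\cdot)$, leaving only an $O(|\eta|^2)$ residual and hence $L(t)V(\eta)\leq c(t)V(\eta)$ for a continuous $c$. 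For (C), I reduce to continuity in $(t,x)$ of $\int_{\Gamma_0}F(\eta\cup\zeta)e_\lambda(a^+(t,x-\cdot);\zeta)d\lambda(\zeta)$ for $F\in C_V(\Gamma_0)$, and apply dominated convergence with the pointwise majorant $\|F\|_V(1+2|\eta|^2+2|\zeta|^2)e_\lambda(a^*;\zeta)$, whose $\lambda$-integral is finite by a direct Lebesgue-Poisson moment computation applied to $a^*/\|a^*\|_{L^1}$.

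The hard step is (A). Given a compact $B\subset\Gamma_0$ with parameters $(\Lambda_B,N_B,\delta_B)$ as in \eqref{COMPACT}, I would take the target $A$ of the same form with $\Lambda_A\supset\Lambda_B$ large, $N_A\gg N_B$, and $\delta_A<\delta_B$ small, chosen in that order. The death contribution to $Q(r,\eta,A^c)$ vanishes exactly as in Lemma \ref{DEATHLEMMA}, so only the two birth integrals need control. I would use the pointwise bound
\[
 \one_{A^c}(\eta\cup\zeta)\leq \sum_{w\in\zeta}\one_{\Lambda_A^c}(w)+\one_{\{|\zeta|>N_A-N_B\}}+\sum_{\stackrel{w,w'\in\eta\cup\zeta}{w\neq w'}}\one_{\{|w-w'|<\delta_A\}},
\]
and integrate against $e_\lambda(a^+(t,x-\cdot);\zeta)\,d\lambda(\zeta)$ using the Mecke identity once or twice. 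The first two terms contribute $e\int_{\Lambda_A^c}a^*(w)\,dw$ and $e\sum_{n>N_A-N_B}1/n!$, both arbitrarily small for suitable $\Lambda_A,N_A$. The minimum-distance term splits into a $\zeta$-$\zeta$ piece, bounded by $e\iint\one_{\{|w-w'|<\delta_A\}}a^*(w)a^*(w')\,dw\,dw'\to 0$ as $\delta_A\to 0$ by dominated convergence, and a $\zeta$-$\eta$ piece, bounded by $eN_B\sup_{y\in\Lambda_B}\int_{B_{\delta_A}(y)}a^*(w)\,dw$. Obtaining this last uniform smallness as $\delta_A\to 0$ is the genuinely delicate point; I would handle it by an $L^1$ truncation $a^*=a^*_{\mathrm{bdd}}+a^*_{\mathrm{err}}$, where the bounded part is controlled via the Lebesgue volume of a $\delta_A$-ball and the remainder by $\|a^*_{\mathrm{err}}\|_{L^1}$, which can be made as small as desired independently of $\delta_A$. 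Uniform boundedness of the intensity prefactors for $t\in[0,T]$ and integration over $r\in[0,T]$ then complete the verification of (A).
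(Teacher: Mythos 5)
Your proposal is correct, and its overall architecture coincides with the paper's: (D) by the same crude bound $q(t,\eta)\leq(\Vert m\Vert_\infty+\Vert\lambda\Vert_\infty)|\eta|+(\Vert a^-\Vert_\infty+\Vert b^+\Vert_\infty)|\eta|^2$; (B) by the same Lebesgue--Poisson moment identities $\int e_\lambda\,d\lambda=e$, $\int|\zeta|e_\lambda\,d\lambda=e$, $\int|\zeta|^2e_\lambda\,d\lambda=2e$ and the same cancellation of the quadratic birth term against the quadratic death term via $b^+\leq a^-$ (your constants differ trivially from the paper's, which is harmless since any continuous $c(t)$ suffices); (C) by dominated convergence with an $e_\lambda(a^*;\cdot)$-majorant exactly as in the paper (your polynomial prefactor $1+2|\eta|^2+2|\zeta|^2$ is slightly too small --- you need, e.g., $3(1+|\eta|^2+|\zeta|^2)$ --- but this is cosmetic); and (A) by the same choice of a target compact $A$ of the form \eqref{COMPACT} with parameters $N_A>N_B$, $\Lambda_A\supset\Lambda_B$, $\delta_A<\delta_B$ and the same three-way split of the failure event into cardinality overflow, escape from $\Lambda_A$, and minimal-distance violation. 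Where you genuinely diverge is in the execution of (A)'s proximity terms. The paper bounds the ``new point near $\eta$'' contribution by a local sup bound $a^+(t,z-w)\leq c$ on $[0,T]\times\Lambda_B\times\Lambda_B^{\delta_B}$ (available from continuity of $a^+$, which it needs anyway for (C)) and a volume estimate in $|B_{\delta_A}|$, and it dispatches the $\zeta$-$\zeta$ proximity set $C(\delta_A)$ with only the remark that $C(\delta_A)\to\emptyset$; your linearized union bound $\one_{A^c}(\eta\cup\zeta)\leq\sum_{w\in\zeta}\one_{\Lambda_A^c}(w)+\one_{\{|\zeta|>N_A-N_B\}}+\sum\one_{\{|w-w'|<\delta_A\}}$ integrated via \eqref{PHD:00} instead controls both proximity pieces through $a^*$ alone --- the $\zeta$-$\zeta$ piece by an explicit dominated-convergence argument on $\iint\one_{\{|w-w'|<\delta_A\}}a^*a^*$, and the $\zeta$-$\eta$ piece by your $L^1$-truncation of $a^*$, which delivers the needed uniformity in $y\in\Lambda_B$ without any continuity or local boundedness of $a^+$. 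Your version is thus marginally more general and also cleaner on a technical point: the paper's displayed decomposition integrates over configurations lying \emph{entirely} in the bad regions ($\Gamma_{\Lambda_A^c}\setminus\{\emptyset\}$, $B_{\delta_A}(\eta)\setminus\{\emptyset\}$) rather than merely \emph{meeting} them, an imprecision your union bound avoids at the cost of a factor $e$; the paper's route, in turn, is shorter given the continuity hypotheses already in force. Both arguments in fact yield the stronger pointwise-in-$t$ form $Q(t,\eta,A^c)<\e$ on $[0,T]\times B$, from which (A) follows by integration.
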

\begin{proof}
By $\int _{\Gamma_0 \backslash \emptyset}|\zeta|e_{\lambda}(a^+(t,x-\cdot);\zeta)d\lambda(\zeta) = e$ we obtain
\begin{align*}
 L(t)V(\eta) &= \sum _{x \in \eta}\left( (2 - e^{-1})\lambda(t,x) - 2m(t,x)\right)
 \\ &\ \ \ + \sum _{x \in \eta}\sum _{y \in \eta \backslash x}\left( (2 - e^{-1})b^+(t,x-y) - 2a^-(t,x-y)\right)
 \\ &\ \ \ + 2|\eta|\sum _{x \in \eta}(\lambda(t,x) - m(t,x)) + 2 |\eta| \sum _{x \in \eta}\sum _{y \in \eta \backslash x}(b^+(t,x-y) - a^-(t,x-y))
 \\ &\leq 2 (\Vert \lambda \Vert_{\infty} + \Vert m \Vert_{\infty})V(\eta),
\end{align*}
which implies condition (B). Condition (D) follows from
\begin{align*}
 q(t,\eta) &= \sum _{x \in \eta}m(t,x) + \frac{e-1}{e}\sum _{x \in \eta}\lambda(t,x)
 \\ &\ \ \ + \sum _{x \in \eta}\sum _{y \in \eta \backslash x}a^-(t,x-y) + \frac{e-1}{e}\sum _{x \in \eta}\sum _{y \in \eta \backslash x}b^+(t,x-y)
 \\ &\leq (\Vert m \Vert_{\infty} + \Vert \lambda \Vert_{\infty})|\eta| + (\Vert a^- \Vert_{\infty} + \Vert b^+ \Vert_{\infty}) |\eta|^2.
\end{align*}
In order to see (C), observe that the assertion is clear for the contribution from the terms of the operator $L(t)$ describing the death of plants.
Because $\lambda$ and $b^+$ are continuous it suffices to show for any $F \in C_V(\Gamma_0), \eta_n \to \eta, t_n \to t$ and $x_n \in \eta_n, x \in \eta$ with $x_n \to x$
\begin{gather*}
 \int _{\Gamma_0 \backslash \emptyset}F(\eta_n \cup \zeta) e_{\lambda}(a^+(t_n, \cdot - x_n);\zeta)\,d\lambda(\zeta)
 \to \int _{\Gamma_0 \backslash \emptyset}F(\eta \cup \zeta) e_{\lambda}(a^+(t, \cdot - x);\zeta)\,d\lambda(\zeta),\\
 \quad n \to \infty.
\end{gather*}
Since the integrand is continuous it converges for each
$\zeta \in \Gamma_0 \backslash \emptyset$ and by \eqref{IPSFINITE:03} with compacts $K = \{t_n \ | \ n \geq 1\} \cup \{t\}$, $B = \{x_n\ |\ n \geq 1\} \cup \{x\}$ we obtain
by dominated convergence the assertion. Therefore it remains to show property (A). Take $T > 0$ and fix a compact $B \subset \Gamma_0$.
Hence there exists $\Lambda_B \subset \R^d$ compact, $N_B \in \N$ and $\delta_B > 0$ such that for any $\eta \in B$ \eqref{COMPACTB} holds.
Condition (A) was shown for the death of plants, so let us focus on the terms contributing to the birth.
Due to the continuity of $\lambda, b^+$ the sum
\[
 \sum _{x \in \eta}\Big( \lambda(t,x) + \sum _{y \in \eta \backslash x}b^+(t,x-y)\Big)
\]
is uniformly bounded on $[0,T]\times B$. Hence it is enough to estimate the integral. Take a compact set $A \subset \Gamma_0$ with the characteristics $N_A > N_B$,
$\delta_A < \delta_B$, $\Lambda_B \subset \Lambda_A$, i.e. \eqref{COMPACT} and set
\[
 B_{\delta_A}(\eta) = \Big\{ \xi \in \Gamma_0 \ | \ \xi \subset \bigcup _{x \in \eta}B_{\delta_A}(x)\Big\},
\]
where $B_{\delta_A}(x) = \{ y \in \R^d\ | \ |x-y| < \delta_A\}$. Then we obtain for $\eta \in B$ and $x \in \eta$, so $x \in \Lambda_B$
\begin{align*}
 & \int _{\Gamma_0 \backslash \emptyset}\one_{A^c}(\eta \cup \zeta)e_{\lambda}(a^+(t,x - \cdot);\zeta)\,d\lambda(\zeta)
 \\ &\leq \Big(\int _{|\zeta| > N_A-N_B} + \int _{B_{\delta_A}(\eta)\backslash \emptyset} +
 \int _{\Gamma_{\Lambda_A^c} \backslash \emptyset} + \int _{C(\delta_A)}\Big)e_{\lambda}(a^+(t,x - \cdot);
 \zeta)\,d\lambda(\zeta)\\
 &= I_1 + I_2 + I_3 + I_4 ,
\end{align*}
where $C(\delta_A) = \{ \zeta \in \Gamma_0 \ | \ \exists w \neq z, \ w,z \in \zeta :\ |w-z| < \delta_A \}$.
For the first integral we obtain uniformly in $t \in [0,T], \eta \in B$ and $x \in \eta$
\begin{align*}
 I_1 \leq \int _{|\zeta| > N_A-N_B}e_{\lambda}(a^*;\zeta)\,d\lambda(\zeta) =
 \sum _{n = N_A - N_B + 1}^{\infty}\frac{\Big( \int _{\R^d}a^*(y)dy\Big)^n}{n!}
\end{align*}
and similarly for the third
\begin{align*}
 I_3 \leq \int _{\Gamma_{\Lambda_A^c} \backslash \emptyset}e_{\lambda}(a^*;\zeta)\,d\lambda(\zeta) =
 \sum _{n=1}^{\infty}\frac{1}{n!}\Big( \int _{\Lambda_A^c}a^*(y)dy\Big)^n = \exp\Big( \int
 _{\Lambda_A^c}a^*(y)dy\Big) - 1.
\end{align*}
This two terms tend uniformly in $\eta \in B$ and $t \in [0,T]$ to zero as $N_A \to \infty$ and $\Lambda_A \to \R^d$. Denote by $c > 0$ a constant for which
\[
 a^+(t,z-w) \leq c, \quad t \in [0,T], \quad z \in \Lambda_B, \quad w \in \Lambda_B^{\delta_B}
\]
with $\Lambda_B^{\delta_B} = \{ w \in \R^d \ | \ d(w, \Lambda_{B}) \leq \delta_B\}$ holds, where
$d(w, \Lambda_B) := \inf \{ d(w,u)\ | \ u \in \Lambda_B\}$. For $I_2$ we obtain with $|B_{\delta_A}|$ the Lebesgue volume of
a ball with radius $\delta_A$ in $\R^d$, since for any $w,z \in \eta$ with $w \neq z$: $B_{\delta_A}(w) \cap B_{\delta_A}(z) = \emptyset$
\begin{align*}
 I_2 = \int _{B_{\delta_A}(\eta)\backslash \emptyset}e_{\lambda}(a^+(t,x-\cdot);\zeta)\,d\lambda(\zeta)
 \leq \Big(\sum _{n=1}^{\infty}\frac{c^n |B_{\delta_A}|^n}{n!}\Big)^{|\eta|} = (e^{c|B_{\delta_A}|}-1)^{|\eta|}.
\end{align*}
Finally due to $C(\delta_A) \to \emptyset$ as $\delta_A \to 0$ we have shown that for all $T > 0$, all compacts $B \subset E$ and $\e > 0$ there
is a compact $A \subset E$ such that
\[
 Q(t,\eta,A^c) < \e, \quad t \in [0,T],\quad \eta \in B,
\]
which is stronger then (A).
\end{proof}
\begin{Remark}
Condition \eqref{IPSFINITE:03} is for instance satisfied if there exist strictly positive continuous functions $\lambda, C > 0$ and $R > 0$, $\alpha > \frac{d}{2}$ such that
\[
 a^+(t,x) \leq \frac{C(t)}{(\lambda(t) + |x|^2)^{\alpha}}, \quad |x| \geq R
\]
holds.
\end{Remark}
\begin{Remark}
 In the time-homogeneous case weaker conditions are sufficient to prove the Feller property.
\end{Remark}

\section*{Appendix}
Set $\Delta := \{ (s,t) \in \R_+ \times \R_+ \ | \ s \leq t \}$, the next two lemmas should be well-known and are included here only for convenience.
\begin{Lemma}\label{LEMMAFELLER}
 Let $f_j: \Delta \times \R_+ \times E \longrightarrow \R$ be a family of measurable functions indexed by $j \in M$, where $M$ is an arbitrary non-empty index set, such that
 \begin{enumerate}
  \item[]{1.} $f_j$ is bounded on compacts uniformly in $j \in M$.
  \item[]{2.} The map $(s,t,x) \longmapsto f_j(s,t,r,x)$ is continuous uniformly in $j \in M$ for fixed $r \in [s,t]$.
 \end{enumerate}
 Then $(s,t,x) \longmapsto \int _{s}^{t}f_j(s,t,r,x)\,dr$ is continuous uniformly in $j \in M$.
\end{Lemma}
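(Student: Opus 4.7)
The plan is to verify the sequential form of the conclusion: I fix $(s_0,t_0,x_0) \in \Delta \times E$ and show that for every sequence $(s_n,t_n,x_n) \to (s_0,t_0,x_0)$ and every choice $j_n \in M$,
\[
  \int_{s_n}^{t_n} f_{j_n}(s_n,t_n,r,x_n)\,dr - \int_{s_0}^{t_0} f_{j_n}(s_0,t_0,r,x_0)\,dr \to 0, \quad n \to \infty.
\]
Uniform continuity in $j$ is equivalent to this sequential statement (its failure yields such a bad sequence), so this suffices. To prepare, I would fix a compact neighborhood $U$ of $(s_0,t_0,x_0)$ in $\Delta \times E$ containing $(s_n,t_n,x_n)$ for large $n$, and a bounded interval $I \subset \R_+$ containing $[s_0,t_0]$ and eventually $[s_n,t_n]$; by hypothesis~1 there is then $M < \infty$ with $|f_j(s,t,r,x)| \le M$ for all $(s,t,x) \in U$, $r \in I$ and $j \in M$.

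I would split the above difference as $A_n + B_n$ with
\[
  A_n := \int_{s_n}^{t_n} f_{j_n}(s_n,t_n,r,x_n)\,dr - \int_{s_0}^{t_0} f_{j_n}(s_n,t_n,r,x_n)\,dr,
\]
\[
  B_n := \int_{s_0}^{t_0} \bigl( f_{j_n}(s_n,t_n,r,x_n) - f_{j_n}(s_0,t_0,r,x_0) \bigr)\,dr.
\]
For $A_n$ the integration regions differ on a set of Lebesgue measure at most $|s_n-s_0|+|t_n-t_0|$, which together with the uniform bound $M$ gives $|A_n| \le M(|s_n-s_0|+|t_n-t_0|) \to 0$. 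For $B_n$ the integrand is dominated by $2M$ on the bounded interval $[s_0,t_0]$, so dominated convergence will yield $B_n \to 0$ as soon as pointwise convergence of the integrand in $r$ is established.

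The essential, and only delicate, step is obtaining this pointwise convergence in $r$ while the index $j_n$ varies with $n$. Hypothesis~2 furnishes, for each fixed $r$, a modulus of continuity of $(s,t,x) \mapsto f_j(s,t,r,x)$ at $(s_0,t_0,x_0)$ that is independent of $j$; evaluating this modulus along the sequence controls $|f_{j_n}(s_n,t_n,r,x_n) - f_{j_n}(s_0,t_0,r,x_0)|$ by a null sequence not depending on the particular $j_n$. This is the exact role of the word \emph{uniformly} in hypothesis~2, and without it the diagonal procedure over $j_n$ would collapse. Once pointwise convergence is in hand, dominated convergence closes the argument.
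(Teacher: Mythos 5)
Your proof is correct and takes essentially the same route as the paper's: decompose the difference into a boundary term (the mismatch of integration limits), bounded by the uniform compact bound times $|s_n-s_0|+|t_n-t_0|$, plus an integrand term handled by dominated convergence with the $2M$ bound, where hypothesis~2 supplies the pointwise-in-$r$ convergence. The only (minor, and in fact slightly more careful) difference is that you make the uniformity in $j$ explicit through the diagonal sequence $j_n$, whereas the paper runs the identical estimates for a fixed $j$ and observes they are uniform.
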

\begin{proof}
 Let $(s,t),(s_n,t_n) \in \Delta$ and $x,x_n \in E$ be such that $s_n \to s$, $t_n \to t$ and $x_n \to x$ as $n \to \infty$. We find $T > 0$
 and a compact $B \subset E$ such that $s,s_n,t,t_n \in [0,T]$ and $x,x_n \in B$ for $n \in \N$.
 Let $f^* := \sup _{j \in M}\sup _{(t_1,t_2,t_3,x) \in \Delta \cap [0,T]^2 \times [0,T] \times B}\ f_j(t_1,t_2,t_3,x) < \infty$, then for any $n \in \N$ and $j \in M$
 \begin{align*}
  & \Big| \int _{s}^{t}f_j(s,t,r,x)\,dr - \int _{s_n}^{t_n}f_j(s_n,t_n,r,x_n)\,dr\Big|
  \\ &\leq |s - s_n| f^* + |t - t_n| f^* + \int _{0}^{T}|f_j(s,t,r,x) - f_j(s_n,t_n,r,x_n)|\,dr.
 \end{align*}
 For each $r \in [0,T]$ the integrand on the right-hand-side tends to zero as $n \to \infty$, and since $|f_j(s,t,r,x) - f_j(s_n,t_n,r,x_n)| \leq 2 f^*$
 dominated convergence yields the assertion.
\end{proof}

The next lemma will show continuity in the case where instead of $dr$ there is an arbitrary kernel $H(t,x,dy)$. In such a case we will need that $E$ is locally compact.

\begin{Lemma}\label{LEMMAFELLER2}
 Let $E$ be a locally compact Polish space,
 \[
  f: \{ (s,r,t) \in \R_+^3\ | \ s \leq r \leq t\} \times E \times E \longrightarrow \R
 \]
 be continuous and bounded, and let
 $H: I \times E \times \mathcal{B}(E) \longrightarrow \R_+$ be a weakly continuous kernel, i.e. for all $F \in C_b(E)$,
 $\R_+ \times E \ni (r,x) \longmapsto \int _{E}F(y)H(r,x, dy)$ is continuous. Then
 \[
  (s,r,t,x) \longmapsto \int _{E}f(s,r,t,x,y)H(r,x,dy)
 \]
 is continuous.
\end{Lemma}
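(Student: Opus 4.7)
The plan is to prove continuity at a fixed point $(s,r,t,x)$ by showing that $\int_E f(s_n, r_n, t_n, x_n, y)\, H(r_n, x_n, dy) \to \int_E f(s,r,t,x,y)\, H(r,x,dy)$ for every sequence $(s_n, r_n, t_n, x_n) \to (s,r,t,x)$ in the domain. I split the difference by inserting the intermediate quantity $\int_E f(s,r,t,x,y)\, H(r_n, x_n, dy)$, obtaining two terms
\begin{align*}
 I_n &= \int_E \bigl[f(s_n,r_n,t_n,x_n,y) - f(s,r,t,x,y)\bigr]\, H(r_n,x_n,dy), \\
 J_n &= \int_E f(s,r,t,x,y)\, H(r_n,x_n,dy) - \int_E f(s,r,t,x,y)\, H(r,x,dy).
\end{align*}
Since $y \longmapsto f(s,r,t,x,y)$ is continuous and bounded, the weak continuity of $H$ immediately yields $J_n \to 0$.

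The main work is to control $I_n$, for which I need a uniform bound on the total masses $H(r_n, x_n, E)$ together with tightness of the family $\{H(r_n, x_n, \cdot)\}_n$. Applying the weak-continuity hypothesis to the constant function $1 \in C_b(E)$ shows that $(r,x) \longmapsto H(r,x,E)$ is continuous, which gives the required uniform bound along the convergent sequence. For tightness I exploit that $E$, being locally compact Polish, is $\sigma$-compact: I fix an exhausting sequence of compacts $K_m \nearrow E$ and, via Urysohn's lemma, choose continuous cut-offs $\varphi_m \in C_b(E)$ with $\one_{K_m} \leq \varphi_m \leq \one_{K_{m+1}}$. Then $\int_E (1-\varphi_m)\, H(r,x,dy) \to 0$ as $m \to \infty$ by dominated convergence, so given $\e > 0$ I pick $m$ making this integral smaller than $\e/2$. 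Weak continuity transfers smallness to $\int_E (1-\varphi_m)\, H(r_n,x_n,dy)$ for all sufficiently large $n$, after which tightness follows by enlarging $K_m$ to absorb the finitely many exceptional indices.

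Armed with tightness, I decompose $I_n$ as an integral over a chosen compact $K$ plus an integral over $K^c$; the latter is bounded by $2\|f\|_\infty \e$ uniformly in large $n$. For the compact part I use that $(s_n,r_n,t_n,x_n)$ eventually lies in a compact subset of the domain of $f$, so $f$ is uniformly continuous on this compact set crossed with $K$; hence $\sup_{y \in K}\,|f(s_n,r_n,t_n,x_n,y) - f(s,r,t,x,y)| \to 0$, and multiplying by the bounded total mass produces convergence to zero. Letting $\e \to 0$ concludes $I_n \to 0$, which combined with $J_n \to 0$ proves the lemma. The main obstacle is the tightness step: weak continuity of $H$ alone does not force the uniform mass concentration needed to exchange the $n \to \infty$ limit with the integral, and it is precisely here that local compactness of $E$, through the explicit continuous truncations $\varphi_m$, does the essential work.
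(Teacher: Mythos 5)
Your proof is correct and follows essentially the same architecture as the paper's: the identical two-term decomposition (your $I_n$ and $J_n$), the same disposal of $J_n$ by weak continuity applied to the continuous bounded function $y \longmapsto f(s,r,t,x,y)$, the same uniform mass bound obtained from $F = 1$, and the same endgame via uniform continuity of $f$ on the compact set $\left(\{(s_n,r_n,t_n)\}_n \cup \{(s,r,t)\}\right) \times \left(\{x_n\}_n \cup \{x\}\right) \times K$. The only genuine divergence is in how the uniform smallness of the measures outside a compact is obtained. The paper fixes a single compact $A$ with $H(r,x,A^c) < \e$, uses local compactness once to sandwich $A \subset \overset{\circ}{A_1} \subset A_1$, and invokes the Portmanteau theorem on the closed set $(\overset{\circ}{A_1})^c$ to get $\limsup_n H(r_n,x_n,(\overset{\circ}{A_1})^c) < \e$ (note that Portmanteau for finite, non-normalized measures also requires convergence of total masses, which is supplied by $F=1$; the paper leaves this implicit). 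You instead exploit $\sigma$-compactness of the locally compact Polish space $E$ to build an exhaustion $K_m$ with Urysohn cutoffs $\varphi_m$, and transfer the smallness of $\int_E(1-\varphi_m)\,H(r,x,dy)$ to the sequence through weak continuity directly. The two mechanisms are really the same fact: your cutoff computation is precisely the standard proof of the Portmanteau inequality for closed sets, so your version is self-contained where the paper cites a theorem, at the cost of slightly more setup. Two small remarks: your Urysohn step needs $K_m \subset \overset{\circ}{K_{m+1}}$, which a properly chosen exhaustion provides (this is the same interior sandwich the paper performs once); and the step of enlarging $K_m$ to absorb the finitely many exceptional indices is superfluous, since only the tail $n \geq n_0$ matters for showing $I_n \to 0$.
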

\begin{proof}
 Let $s_n \leq r_n \leq t_n$ be such that $s_n \to s, r_n \to r, t_n \to t$ and $x_n \to x$ as $n \to \infty$. Fix $\e > 0$ and take $A \subset E$ compact with
 $H(r,x,A^c) < \e$. Since $E$ is a locally compact space we can find another compact $A_1 \subset E$ with $A \subset \overset{\circ}{A_1} \subset A_1$.
 Portmanteau implies then $\underset{n \to \infty}{\limsup}\ H(r_n, x_n, (\overset{\circ}{A_1})^c) \leq H(r, x,(\overset{\circ}{A_1})^c) \leq H(r,x, A^c) < \e$.
 The function $f$ restricted to the compact $\{(s_n,r_n,t_n)\ | \ n \in \N\} \cup \{(s,r,t)\} \times \{ x_n \ | \ n \in \N\} \cup \{x\} \times A_1$ is
 uniformly continuous and hence we obtain for sufficiently large $n$
 \begin{align*}
  & \Big| \int _{E}f(s_n,r_n,t_n,x_n,y)H(r_n, x_n,dy) - \int _{E}f(s,r,t,x,y)H(r,x,dy)\Big|
  \\ &\leq \int _{E}|f(s_n,r_n,t_n,x_n,y) - f(s,r,t,x,y)| H(r_n,x_n,dy)
  \\ & \ + \Big| \int _{E}f(s,r,t,x,y)H(r_n,x_n,dy) - \int _{E}f(s,r,t,x,y)H(r, x,dy)\Big|
  \\ &\leq H(r_n,x_n, A_1)\e + 2 \Vert f \Vert H(r_n,x_n,(\overset{\circ}{A_1})^c) + \e
  \\ &\leq H(r_n, x_n)\e + 2\Vert f \Vert \e + \e.
 \end{align*}
 Due to the weak continuity of $H$ the function $H(r,x) := H(r,x,E)$ is continuous and hence $H(r_n,x_n)$ is uniformly bounded in $n\in \N$, which shows the assertion.
\end{proof}

{\it{Acknowledgments.}}  The author would like to thank Viktor
Bezborodov for many fruitful discussions and critical remarks.

\end{document}